\newtheorem{theorem}{Theorem}[section]
\newtheorem{lemma}[theorem]{Lemma}
\newtheorem{corollary}[theorem]{Corollary}
\newtheorem{proposition}[theorem]{Proposition}
\theoremstyle{definition}
\newtheorem{definition}[theorem]{Definition}
\newtheorem{example}[theorem]{Example}
\newtheorem{question}[theorem]{Question}
\newtheorem{remark}[theorem]{Remark}
\numberwithin{equation}{section}
\newcommand\N{\mathbb{N}}
\newcommand\R{\mathbb{R}}
\newcommand\T{\mathbb{T}}
\newcommand\Z{\mathbb{Z}}
\newcommand\cont{\mathfrak{c}}
\newcommand\fraction[2]{{#1}/{#2}}
\newcommand\grp[1]{\langle{#1}\rangle}
\title[Markov's potential density]{Hewitt-Marczewski-Pondiczery type theorem for abelian groups and Markov's potential density}
\author[D. Dikranjan]{Dikran Dikranjan}
\address
{Universit\`a di Udine, Dipartimento di Matematica e Informatica
\\
 via delle Scienze, 206 - 33100 Udine, Italy}
\email{dikran.dikranjan@dimi.uniud.it}
\thanks{The first named author was partially supported by SRA, grants P1-0292-0101 and J1-9643-0101.}
\author[D. Shakhmatov]{Dmitri Shakhmatov}
\address
{Graduate School of Science and Engineering,
Division of Mathematics, Physics and Earth Sciences\\
Ehime University, Matsuyama 790-8577, Japan}
\email{dmitri@dpc.ehime-u.ac.jp}
\thanks{The second named author was partially supported by the Grant-in-Aid for Scientific Research (C) No.~19540092 by the Japan Society for the Promotion of Science (JSPS)}
\keywords{abelian group, monomorphism, homomorphism, potentially dense set, dense subset, precompact group}
\subjclass{Primary: 22A05; Secondary: 20K99, 22C05, 54A25, 54B10, 54D65}
\begin{document}
\begin{abstract}
For an uncountable cardinal $\tau$ and a subset $S$ of an abelian group $G$, the following conditions are equivalent: 
\begin{itemize}
\item[(i)] $|\{ns:s\in S\}|\ge \tau$ for all integers $n\ge 1$;
\item[(ii)] there exists a group homomorphism $\pi:G\to \T^{2^\tau}$ such that $\pi(S)$ is dense in  $\T^{2^\tau}$. 
\end{itemize}
Moreover, if $|G|\le 2^{2^\tau}$, then the following item can be added to this list:
\begin{itemize}
\item[(iii)] there exists an isomorphism $\pi:G\to G'$ between $G$ and a subgroup $G'$ of $\T^{2^\tau}$ such that $\pi(S)$ is dense in  $\T^{2^\tau}$. 
\end{itemize}

We prove that the following conditions are equivalent for an uncountable subset $S$ of an abelian group $G$ that is either (almost) torsion-free or divisible:
 \begin{itemize}
  \item[(a)] $S$ is $\mathscr{T}$-dense in $G$ for some Hausdorff group topology $\mathscr{T}$ on $G$;
  \item[(b)] $S$ is $\mathscr{T}$-dense in some precompact Hausdorff group topology $\mathscr{T}$ on $G$;
  \item[(c)] $|\{ns:s\in S\}|\ge \min\left\{\tau:|G|\le 2^{2^\tau}\right\}$ for every integer $n\ge 1$.
\end{itemize}
This partially resolves a question of Markov going back to 1946.
 \end{abstract}

\maketitle

We use $\N$  and $\mathbb{P}$  to denote the set of  positive natural numbers and prime numbers, respectively. 
The symbol $\cont$ denotes the cardinality of the continuum, and $\omega$ denotes the first infinite cardinal. For an infinite cardinal $\kappa$, let 
$$
\log\kappa=\min\{\tau\ge\omega: \kappa\le 2^\tau\}.
$$

The group of integer numbers is denoted by $\Z$.

For a subset $X$ of an abelian group $G$, we use $\grp{X}$ to denote the subgroup of $G$ generated by $X$, and let $nX=\{nx:x\in X\}$ for every $n\in\Z$. As usual, 
$$
t(G)=\{g\in G: ng=0
\mbox{ for some }
n\in \N\}
$$
denotes the  {\em torsion part\/} of $G$. An abelian group $G$ is said to be:
\begin{itemize} 
\item[(i)]   {\em torsion-free\/} if $t(G)=\{0\}$;
\item[(ii)]  {\em torsion\/} if $t(G)=G$;
\item[(iii)] {\em bounded\/} if $nG=\{0\}$ for some $n\in \N$; 
\item[(iv)] {\em divisible\/} if $nG=G$ for every $n\in\N$. 
\end{itemize}

Recall that a group homomorphism $\varphi:G\to H$ from an abelian group $G$ to an abelian group $H$ is a {\em monomorphism\/} if $\ker \varphi=\{0\}$. 

A topological group is {\em precompact\/} if 
it is isomorphic to a subgroup of some compact Hausdorff group.
As usual, $w(X)$ denotes the {\em weight\/} of a topological space $X$.

\section{Introduction} 

The classical result of Hewitt-Marczewski-Pondiczery states: If $\tau$ is an infinite cardinal, $I$ is a set such that  $|I|\le 2^\tau$, and for every $i\in I$ a space $X_i$ has a dense subset of size $\le\tau$, then the product  $X=\prod\{X_i:i\in I\}$
also has a dense subset of size $\le\tau$ (see, for example, \cite[Theorem 2.3.15]{Eng}). In this paper we investigate the following ``algebraic version'' of this theorem. Let $\kappa$ be an infinite cardinal and $\T=\R/\Z$ 
the circle group. Given a fixed subset $S$ of an abelian group $G$, we attempt to find a group 
homomorphism $\pi:G\to \T^\kappa$ such that $\pi(S)$ becomes dense in $\T^\kappa$. Of particular interest is the special case when $\pi$ can be chosen to be a monomorphism, that is, when the group $G$ and the subgroup $\pi(G)$ of $\T^\kappa$ become isomorphic. Our choice of the target group is justified by the fact that every abelian group $G$ is isomorphic to a subgroup of $\T^\kappa$ for a suitable 
cardinal $\kappa$. To ensure a closer resemblance of the Hewitt-Marczewski-Pondiczery theorem, we pay special attention to the case $\kappa=2^\tau$ for some infinite cardinal
$\tau$ by addressing the following question: Given a subset $S$ of an abelian group $G$ such that $|S|\ge\tau$ and $|G|\le 2^{2^\tau}$, does there exist a monomorphism $\pi:G\to \T^{2^\tau}$ such that $\pi(S)$ becomes dense in $\T^{2^\tau}$? We completely resolve this problem in the case when $S$ is uncountable. Moreover, we provide a complete answer to this problem even for a countable set $S$ such that no multiple $nS$ of $S$ (for $n\in\N$) is contained in a finitely generated subgroup of $G$. The remaining case is being resolved by the authors in~\cite{DS_Kronecker}.
 
The origin of this setting can be traced back to the 1916 paper of  Weyl~\cite{W}.  We recall the classical Weyl's uniform distribution theorem: Given a faithfully indexed set $S=\{a_n:n\in\N\}\subseteq \Z$, the set of all $\alpha\in \T$ such that the set $S\alpha=\{a_n\alpha: n\in \N\}\subseteq \T$ is  uniformly distributed has full measure $1$. Since uniform distribution  implies density in $\T$, it follows that $S\alpha$ is dense in $\T$ for almost all $\alpha\in \T$. Every $\alpha\in \T$ determines uniquely a homomorphism $h_\alpha: \Z \to \T$ such that $h_\alpha(1) = \alpha$. Furthermore, $\alpha\in \T$ generates a dense subgroup $\grp{\alpha}$ of $\T$ iff  $\alpha$ is  non-torsion iff the  homomorphism $h_\alpha$ is a monomorphism. Hence, one can state (a consequence of) Weyl's theorem by simply saying that for every infinite subset $S$ of $\Z$, there exists a monomorphism $\pi: \Z \to \T$ such that $\pi(S)$ is dense in $\T$. Tkachenko and Yaschenko~\cite{TY} consider homomorphisms $\pi:G\to \T^\omega$ such that $\pi(S)$ is dense in $\T^\omega$, for a certain class of groups $G$. They use such homomorphisms as a technical tool in addressing the problem suggested first in 1946 by Markov~\cite{Mar}.

According to Markov~\cite{Mar}, a subset $S$ of a group $G$ is called {\em potentially dense\/} (in $G$) provided that $G$ admits some Hausdorff  group topology $\mathscr{T}$ such that $S$ is dense in $(G,\mathscr{T})$. The last section of~\cite{Mar}  is exclusively dedicated to the following problem: Which subsets of a group $G$ are potentially dense in $G$? Markov  proved that every infinite subset of $\Z$ is potentially dense in $\Z$~\cite{Mar}. This was strengthened in \cite[Lemma 5.2]{DT0} by showing that every infinite subset of $\Z$ is dense in some precompact metric group topology on $\Z$. (Apparently,  the authors of~\cite{Mar} and~\cite{DT0} were unaware that both these results easily follow from  Weyl's uniform distribution theorem.) Further progress was obtained by Tkachenko and Yaschenko~\cite{TY} who proved  the following theorem: If an abelian group $G$ of size at most $\cont$  is either almost  torsion-free or has exponent $p$ for some prime $p$, then every infinite subset of $G$ is potentially dense in $G$. (According to~\cite{TY}, an abelian group $G$  is {\em almost torsion-free\/} if $r_p(G)$ is finite for every prime $p$.)   

In~\cite{DS_Kronecker}, the authors resolved Markov's problem for {\sl countable\/} sets: A countable subset $S$ of an abelian group $G$ is potentially dense in $G$ if and only if $|G|\le 2^\cont$ and $S$ is Zariski dense in $G$. Recall that a subset $S$ of an abelian group $G$ is said to be {\em Zariski dense\/} in $G$  provided that, if $k\in \N$, $g_1, g_2, \ldots, g_k\in G$, $n_1, n_2, \ldots, n_k\in \N$ and  each $s\in S$ satisfies the equation $n_is=g_i$ for some $i= 1,2,\ldots, k$ (depending on $s$),  then every $g\in G$ also satisfies some equation $n_jg=g_j$, for a suitable $j= 1,2,\ldots, k$ (\cite{DS_Abelian_MZ}; see also \cite[Section 5]{DS_OPIT}). 

In this manuscript we investigate the remaining case of {\sl uncountable\/} sets. In particular, we obtain a new sufficient condition  that guarantees that a  subset $S$ of an abelian group $G$  is potentially dense in $G$. Moreover, when this condition is satisfied, we prove that the topology $\mathscr{T}$ on $G$ such that $S$ is $\mathscr{T}$-dense in $G$ can be chosen to be precompact. When $S$ is uncountable and  $G$ belongs to a wide class of abelian groups (for example, almost torsion-free or divisible groups), our sufficient condition turns out to be also necessary for potential density of $S$ in $G$.  At last but not least, our sufficient condition is rather powerful in the countable case as well, becuase the only case that is {\em not\/} covered by it is when $nS$, for a suitable $n\in\N$, is contained in a finitely generated subgroup of $G$. Therefore, it is only this special case that still requires the  substantially more sophisticated techniques from~\cite{DS_Kronecker} to prove potential density (in some precompact group topology).

\section{Sending a given subset of an abelian group densely in $\T^\kappa$}

\begin{definition}
Let $\tau$ be an infinite cardinal. We say that a subset $S$ of an abelian group $G$ is {\em $\tau$-wide\/} if $nS\setminus \grp{S'}\not=\emptyset$ for every $n\in \N$ and each $S'\subseteq S$ with $|S'|<\tau$.
\end{definition}

Our next proposition collects four simple facts that clarify the above definition and facilitate future references.

\begin{proposition}
\label{tau-wide:remark}
Let $S$ be a subset of an abelian group $G$.
\begin{itemize}
\item[(i)] If $S$ is uncountable, then $S$ is $\tau$-wide if and only if $\min\{|nS|:n\in\N\}\ge\tau$. 
\item[(ii)] If $S$ contains an infinite independent subset $S'$, then $S$ is $|S'|$-wide. 
\item[(iii)] If $G$ is torsion, then $S$ is $\omega$-wide if and only if $nS$ is infinite for every $n\in\N$ (that is, $S$ is 0-almost torsion in the sense of ~\cite{DS-Forcing}). Therefore, $G$ contains an $\omega$-wide set $S$ if and only if $G$ is unbounded (and in such a case the $\omega$-wide sets coincide with the Zariski dense sets). 
\item[(iv)] If $S$ is not $\omega$-wide, then $nS$ is contained in a finitely generated subgroup of $G$, for a suitable $n\in\N$.
\end{itemize}
\end{proposition}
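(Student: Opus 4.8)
The plan is to verify the four items one by one; each is elementary and uses only the cardinal arithmetic $|\langle X\rangle|=\max\{\omega,|X|\}$ for infinite $X$ (so $|\langle X\rangle|\le\omega$ when $X$ is finite), together with two standard facts about abelian groups: a finitely generated abelian group is Noetherian, so its subgroups are again finitely generated; and a finitely generated torsion abelian group is finite. Item~(iv) is the definition unwound: if $S$ is not $\omega$-wide there are $n\in\N$ and a finite $S'\subseteq S$ with $nS\subseteq\langle S'\rangle$, and $\langle S'\rangle$ is finitely generated by $S'$. For item~(i) I will prove the equivalent statement that $S$ is not $\tau$-wide if and only if $|nS|<\tau$ for some $n$. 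For one implication, given $n$ with $|nS|<\tau$, pick $s_y\in S$ with $ns_y=y$ for each $y\in nS$ and set $S'=\{s_y:y\in nS\}$; then $|S'|\le|nS|<\tau$ and $nS=nS'\subseteq\langle S'\rangle$, so $S$ is not $\tau$-wide. For the other, if $S$ is not $\tau$-wide, fix $n$ and $S'$ with $|S'|<\tau$ and $nS\subseteq\langle S'\rangle$; then $|nS|\le|\langle S'\rangle|\le\max\{\omega,|S'|\}<\tau$, the last step using that $\tau$ is uncountable, which in the applications is guaranteed by the hypothesis that $S$ is uncountable.

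For item~(ii), put $F=\langle S'\rangle$. Since $S'$ is independent, $F$ is free abelian with basis $S'$, so for each $n\in\N$ multiplication by $n$ is an isomorphism of $F$ onto $nF$ carrying $S'$ to a basis $nS'$ of $nF$; in particular $|nS'|=|S'|$, and $nF$ is not finitely generated because $S'$ is infinite. Now fix $n\in\N$ and $S''\subseteq S$ with $|S''|<|S'|$, and suppose $nS'\subseteq\langle S''\rangle$; then the subgroup $nF=\langle nS'\rangle$ is contained in $\langle S''\rangle$. But $\langle S''\rangle$ is generated by fewer than $|S'|$ elements, so it is either finitely generated, which is impossible since then its subgroup $nF$ would be finitely generated, or of cardinality $|S''|<|S'|=|nS'|$, which is impossible since $nS'\subseteq\langle S''\rangle$. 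Hence $nS'\not\subseteq\langle S''\rangle$, so $nS\not\subseteq\langle S''\rangle$; as $n$ and $S''$ were arbitrary, $S$ is $|S'|$-wide.

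For item~(iii), assume $G$ is torsion. If $S$ is $\omega$-wide, then $nS$ is infinite for every $n$: this is the contrapositive of the first implication in (i), valid with $\tau=\omega$. Conversely, if $nS$ is infinite for every $n$, then for any $n\in\N$ and any finite $S'\subseteq S$ the subgroup $\langle S'\rangle$ is finitely generated and torsion, hence finite, and so cannot contain the infinite set $nS$; thus $S$ is $\omega$-wide. For the remaining assertions: if $G$ is bounded, say $mG=\{0\}$, then $mS$ is finite for every $S\subseteq G$, so no subset is $\omega$-wide; and if $G$ is unbounded then $nG$ is infinite for every $n$, for otherwise, writing $K=\{g\in G:ng=0\}$, the isomorphism $G/K\cong nG$ would make $k:=[G:K]$ finite, whence $kg\in K$ and $(nk)g=0$ for all $g\in G$, contradicting unboundedness, and so $S=G$ is $\omega$-wide. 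Finally, over an unbounded torsion $G$ the $\omega$-wide sets coincide with the Zariski dense ones: if $S$ is not Zariski dense then $S\subseteq\bigcup_{i=1}^{k}\{g\in G:n_ig=g_i\}$ for suitable $k$, $n_i\in\N$, $g_i\in G$, and putting $n=n_1\cdots n_k$ we find that each $s\in S$ satisfies $n_is=g_i$ for some $i\le k$, hence $ns=(n/n_i)g_i$, so $|nS|\le k$ and $S$ is not $\omega$-wide; conversely, if $S$ is not $\omega$-wide then by the first part $nS$ is finite for some $n$, so $S\subseteq\{g\in G:ng\in nS\}=\bigcup_{y\in nS}\{g\in G:ng=y\}$, a union of finitely many equation sets that is a proper subset of $G$ because $nG$ is infinite, and therefore $S$ is not Zariski dense.

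None of the four items is genuinely hard, but the only steps that go beyond cardinal bookkeeping sit in~(iii): that a finitely generated torsion abelian group is finite, that an unbounded torsion group has every multiple $nG$ infinite, and the ``product of the moduli'' trick collapsing a covering by equation sets to a finite image $nS$. I therefore expect item~(iii), and in particular the clean identification of $\omega$-wideness with Zariski density, to be the only place that needs a moment's care, though the argument stays short.
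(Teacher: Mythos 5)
The paper gives no proof of this proposition at all (it is introduced as a list of ``simple facts''), so there is no argument of the authors to compare your route with; I can only assess correctness. Your proofs of (ii), (iii) and (iv) are correct. One caveat on (ii): you read ``independent'' as $\Z$-independence, so that $\grp{S'}$ is free abelian with basis $S'$. Under the weaker classical notion of independence (where $\sum n_i s_i=0$ only forces each $n_i s_i=0$, so torsion elements are allowed) the statement is false: an infinite independent set $S'$ of elements of order $2$, with $S=S'$, has $2S=\{0\}$ and so is not even $\omega$-wide. Your reading is therefore the one under which (ii) is true, but you should state explicitly that this is the hypothesis you use when you declare $\grp{S'}$ free.

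The one genuine misstep is in (i). In the implication ``not $\tau$-wide implies $|nS|<\tau$ for some $n$'' you need $\max\{\omega,|S'|\}<\tau$, i.e.\ $\tau>\omega$, and you justify this by claiming that uncountability of $\tau$ ``is guaranteed by the hypothesis that $S$ is uncountable.'' That is a non sequitur: $\tau$ is a free parameter, and an uncountable $S$ is perfectly compatible with $\tau=\omega$. Moreover the gap cannot be argued around, because for $\tau=\omega$ that direction of (i) is genuinely false even for uncountable $S$: take $G=\Z\oplus\bigoplus_{\omega_1}(\Z/2\Z)$ and $S=G$; then $nS$ is infinite for every $n\in\N$, so $\min\{|nS|:n\in\N\}\ge\omega$, yet $2S=2\Z\oplus\{0\}\subseteq\grp{(1,0)}$ with $(1,0)\in S$, so $S$ is not $\omega$-wide. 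Thus the equivalence in (i) holds exactly when $\tau$ is uncountable; with that hypothesis your computation $|nS|\le|\grp{S'}|\le\max\{\omega,|S'|\}<\tau$ is correct, and the assumption that $S$ is uncountable is then automatic rather than doing any work. The defect is therefore as much in the statement as printed (the uncountability belongs on $\tau$, not on $S$) as in your write-up, but the sentence attributing the uncountability of $\tau$ to that of $S$ must be replaced by the explicit assumption $\tau>\omega$. The converse direction, $|nS|<\tau$ for some $n$ implies not $\tau$-wide, is proved by you correctly and needs no hypotheses, which is also all that your item (iii) borrows from (i).
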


\begin{theorem}
\label{HMP}
Let $\tau$ be an infinite  cardinal and $S$ a $\tau$-wide subset of an abelian group $G$.
Then there exist a subgroup $H$ of $G$ and a monomorphism $\varphi:H\to \T^{2^\tau}$ such that $|H|=\tau$ and $\varphi(H\cap S)$ is dense in $\T^{2^\tau}$.
\end{theorem}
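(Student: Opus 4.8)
The plan is to reduce everything to the construction of a single monomorphism, and then to build that monomorphism by a Hewitt--Marczewski--Pondiczery type argument tailored to the group $H$.

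\emph{Reduction and reformulation.} It suffices to choose $S_0\subseteq S$ with $|S_0|=\tau$, put $H=\grp{S_0}$ (so $|H|=\tau$), and produce a monomorphism $\varphi\colon H\to\T^{2^\tau}$ with $\varphi(S_0)$ dense in $\T^{2^\tau}$; indeed $\varphi(H\cap S)\supseteq\varphi(S_0)$ is then dense. By Proposition~\ref{tau-wide:remark} one may choose $S_0$ so that it stays $\tau$-wide inside $H=\grp{S_0}$ --- for uncountable $\tau$ this just means extracting $S_0$ with $|nS_0|\ge\tau$ for every $n$, and for $\tau=\omega$ one may take $S_0=S$ whenever $S$ is infinite. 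Now a homomorphism $\varphi\colon H\to\T^{2^\tau}$ is nothing but a family of characters $(\gamma_\alpha)_{\alpha<2^\tau}$ with $\gamma_\alpha\in\mathrm{Hom}(H,\T)$; it is a monomorphism exactly when the $\gamma_\alpha$ separate the points of $H$, and $\varphi(S_0)$ is dense in $\T^{2^\tau}$ exactly when for every finite $A\subseteq2^\tau$ the set $\{(\gamma_\alpha(s))_{\alpha\in A}:s\in S_0\}$ is dense in $\T^A$. So the whole task is to manufacture $2^\tau$ characters of $H$ that separate points and whose every finite subfamily carries $S_0$ onto a dense subset of the corresponding finite-dimensional torus.

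\emph{Construction of the characters.} In the favourable case where $S_0$ contains an independent subset $B$ with $|B|=\tau$ whose orders are unbounded --- automatic, for example, when $G$ is torsion-free and $\tau$ is uncountable --- one has $\grp{B}=\bigoplus_{b\in B}\grp{b}$, and an easy variant of the classical Hewitt--Marczewski--Pondiczery theorem produces a dense \emph{independent} subset $D=\{d_b:b\in B\}$ of $\T^{2^\tau}$ with each $d_b$ of the same order as $b$ (here one needs the orders unbounded, so that the union of the loci $\T^{2^\tau}[o(b)]$ is dense); then $b\mapsto d_b$ extends to a monomorphism $H\to\T^{2^\tau}$ taking $B$, hence $S_0$, onto a dense set. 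The general case is not reducible to this one --- already for $G=\Z[1/2]$ and $S=\{1/2^{k}:k\in\N\}$ the set $S$ has no two independent elements, and the relevant monomorphism is obtained only by a sufficiently ``generic'' choice of the successive $2$-power divided images of $1$ --- so one must build $\varphi$ directly, by a transfinite recursion of length $2^\tau$ that picks the $\gamma_\alpha$ (points of the compact group $\mathrm{Hom}(H,\T)$, of weight $\tau$) so as to meet simultaneously the $\tau$ separation requirements, the density requirements, and the requirement that the $\gamma_\alpha$ respect all relations holding in $H$. The hypothesis that $S_0$ is $\tau$-wide in $H$ is exactly what keeps this recursion alive: since no multiple $nS_0$ lies in a subgroup generated by fewer than $\tau$ elements, no ``small'' bundle of previously imposed constraints can determine the values of the next character on $nS_0$, leaving room to place it; and in the torsion case Proposition~\ref{tau-wide:remark}(iii) guarantees that the orders occurring in $S_0$ are unbounded, which is precisely what prevents $\varphi(S_0)$ from being confined to the nowhere-dense subgroup $\Z(N)^{2^\tau}=\T^{2^\tau}[N]$.

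\emph{The main obstacle.} The real difficulty is the cardinal gap buried in the density requirements: there are $2^\tau$ of them --- one for each finite coordinate set $A$ and each rational box in $\T^A$ --- whereas only $\tau$ points $s\in S_0$ can witness them, so one cannot discharge one requirement per step of a length-$2^\tau$ recursion. This is precisely the Hewitt--Marczewski--Pondiczery phenomenon, and it is overcome by indexing the $2^\tau$ coordinates of $\T^{2^\tau}$ by an appropriate ``separating family'' (for instance Pondiczery's finite partial functions on $\tau$, equivalently the clopen subsets of $\{0,1\}^{\tau}$), engineered so that a single point $s\in S_0$ automatically witnesses $2^\tau$-many requirements at once. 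Fusing this combinatorial indexing scheme with the algebraic demand that $\varphi$ be a point-separating homomorphism respecting the relations of $H$, while exploiting $\tau$-wideness to guarantee enough freedom at every stage of the recursion, is the crux of the proof; the remaining verifications are routine.
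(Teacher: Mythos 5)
Your reduction to a subset $S_0$ of size $\tau$ and your identification of the Hewitt--Marczewski--Pondiczery obstruction both point in the right direction, but the argument stops exactly where the real work begins: the step you label as ``the crux \dots\ the remaining verifications are routine'' is not carried out, and it is not routine --- it is essentially the whole proof. Moreover, the two halves of your sketch are never reconciled into an actual construction. A recursion of length $2^\tau$ choosing characters $\gamma_\alpha\in\mathrm{Hom}(H,\T)$ one at a time does not work as stated, because density is a \emph{joint} condition on finite subfamilies: at stage $\alpha$ you would have to guarantee, for every finite $B\subseteq\alpha$ and every box in $\T^{B\cup\{\alpha\}}$, a witness $s\in S_0$ whose already frozen values $(\gamma_\beta(s))_{\beta\in B}$ land correctly, and with only $\tau$ points against up to $2^\tau$ earlier coordinates this cannot be discharged coordinate-by-coordinate. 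The workable organization (the paper's) is dual: a recursion of length $\tau$ over elements $x_\alpha\in S$, where the basic open sets of $\T^{2^\tau}$ are replaced by the $\tau$-many ``boxes'' $F(\mathscr{U},v)=\prod_{\xi}v(U_{\xi,\mathscr{U}})$ determined by finite clopen partitions $\mathscr{U}$ of $2^\tau$ with arcs assigned by $v$ (every nonempty open set contains such a box), and at stage $\alpha$ the single new point $x_\alpha$ is mapped into the $\alpha$-th box, so one element of $S$ witnesses $2^\tau$-many open sets at once. You name this idea but never set it up, nor show how it coexists with extending a monomorphism.

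The more serious gap is the extension step itself, which you do not address at all. The delicate case is when some multiple of the new element already lies in the subgroup $H'_\alpha$ built so far: if $m=\min\{n\in\N: nx_\alpha\in H'_\alpha\}$, then $m\varphi(x_\alpha)$ is already forced to equal $\varphi'_\alpha(mx_\alpha)$, and a prescribed arc of length less than $1/m$ in some coordinate may contain \emph{no} admissible $m$-th root of that value; your appeal to ``$\tau$-wideness leaves room to place the next character'' does not exclude this, since a single pre-existing relation pins down $m\varphi(x_\alpha)$ exactly. The paper's mechanism is calibrated precisely to this point: having fixed the box, one picks $n_\alpha$ with $2/n_\alpha$ below all its arc lengths, and then uses $\tau$-wideness to choose $x_\alpha\in S$ with $n_\alpha!\,x_\alpha\notin H'_\alpha$, forcing $m>n_\alpha$ and hence $2/m<l(V_\xi)$ in every coordinate; Lemmas~\ref{torus:lemma}--\ref{Lemma:extension} then yield $f$ in the box with $mf=\varphi'_\alpha(mx_\alpha)$ and $nf\notin\varphi'_\alpha(H'_\alpha)$ for $1\le n<m$, and extend the monomorphism by $x_\alpha\mapsto f$ (the independent case being handled separately and more easily). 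This quantitative interplay between the arc lengths of the target box and the order of $x_\alpha$ modulo $H'_\alpha$ is exactly where $\tau$-wideness enters, and without it your recursion cannot be completed. (A minor further point: for $\tau=\omega$ you take $S_0=S$, which violates $|H|=\tau$ when $S$ is uncountable; one must extract a countable $\omega$-wide subset, which is possible but needs an argument.)
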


The proof of this theorem is postponed until Section~\ref{Proof:section}.

\begin{corollary}
\label{main:corollary}
Let $\tau$ be an infinite  cardinal and $S$ a $\tau$-wide subset of an abelian group $G$. Then:
\begin{itemize}
\item[(i)] there exists a group homomorphism $\pi:G\to \T^{2^\tau}$ such that $\pi(S)$ is dense in  $\T^{2^\tau}$;
\item[(ii)] if one additionally assumes that $|G|\le 2^{2^\tau}$, then $\pi$ from the item (i) can be chosen to be a monomorphism.
\end{itemize}
\end{corollary}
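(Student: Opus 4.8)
The plan is to deduce both items from Theorem~\ref{HMP} by extending the monomorphism $\varphi\colon H\to\T^{2^\tau}$ produced there to a homomorphism defined on all of $G$. Density will then be automatic: since $\varphi(H\cap S)$ is \emph{already} dense in $\T^{2^\tau}$, any homomorphism $\pi\colon G\to\T^{2^\tau}$ with $\pi|_H=\varphi$ satisfies $\pi(S)\supseteq\pi(H\cap S)=\varphi(H\cap S)$, so $\pi(S)$ is dense. For item (i) this already finishes the proof: $\T^{2^\tau}$ is divisible, hence injective, so $\varphi$ extends to some homomorphism $\pi\colon G\to\T^{2^\tau}$, and $\pi(S)$ is dense. (No cardinality restriction on $G$ enters here.)

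For item (ii) the task is to extend $\varphi$ to a \emph{monomorphism} of $G$ into $\T^{2^\tau}$; as $G$ may be much larger than $H$, an arbitrary injective extension need not exist, so I would proceed through divisible hulls. Fix a divisible hull $D$ of $G$, so $G\le D$ and $|D|=|G|+\omega\le 2^{2^\tau}$, and realize inside $D$ a divisible hull $\hat H$ of $H$ (possible because $D$ is divisible and $H\le D$). Since divisible subgroups split off, $D=\hat H\oplus F$ with $F$ a (necessarily divisible) subgroup of cardinality $\le 2^{2^\tau}$. By injectivity of $\T^{2^\tau}$, $\varphi$ extends to a homomorphism $\hat\varphi\colon\hat H\to\T^{2^\tau}$; and because $H$ is essential in $\hat H$ while $\hat\varphi|_H=\varphi$ is injective, $\hat\varphi$ is itself a monomorphism. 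The image $\hat\varphi(\hat H)$ is a divisible subgroup of $\T^{2^\tau}$, hence splits off as well: $\T^{2^\tau}=\hat\varphi(\hat H)\oplus C$.

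The decisive point is that the complement $C$ is still ``as large as $\T^{2^\tau}$ itself''. One verifies routinely that $\T^{2^\tau}$ is divisible with $r_0(\T^{2^\tau})=r_p(\T^{2^\tau})=2^{2^\tau}$ for every prime $p$ (for instance via $\T\cong\mathbb{Q}/\Z\oplus\mathbb{Q}^{(\cont)}$ together with $|\T^{2^\tau}|=2^{2^\tau}$), so as an abstract group $\T^{2^\tau}$ is a direct sum of $2^{2^\tau}$ copies of $\mathbb{Q}$ and, for each prime $p$, of $2^{2^\tau}$ copies of $\Z(p^\infty)$. Since $r_0(\hat\varphi(\hat H))$ and every $r_p(\hat\varphi(\hat H))$ are at most $|\hat H|=\tau<2^{2^\tau}$, the complement $C$ has the same invariants $r_0(C)=r_p(C)=2^{2^\tau}$; as $F$ is divisible with $|F|\le 2^{2^\tau}$, it follows that there is a monomorphism $\psi\colon F\to C$. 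Now define $\Phi\colon D=\hat H\oplus F\to\hat\varphi(\hat H)\oplus C=\T^{2^\tau}$ by $\Phi(h+f)=\hat\varphi(h)+\psi(f)$. Then $\Phi$ is a monomorphism, because its kernel must project trivially onto each of the complementary summands $\hat\varphi(\hat H)$ and $C$, and $\Phi|_H=\hat\varphi|_H=\varphi$. Restricting $\Phi$ to $G\le D$ gives the desired monomorphism $\pi\colon G\to\T^{2^\tau}$ with $\pi|_H=\varphi$, whence $\pi(S)\supseteq\varphi(H\cap S)$ is dense.

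The main obstacle — and the one spot where the hypothesis $|G|\le 2^{2^\tau}$ is really needed — is obtaining the embedding $\psi\colon F\to C$: one must be sure that after reserving inside $\T^{2^\tau}$ a copy of $H$ on which the density of $S$ is witnessed, there is still room to inject the remainder of $G$. This is exactly what the naive shortcut misses: taking $\pi=(\pi_0,\rho)\colon G\to\T^{2^\tau}\times\T^{2^\tau}\cong\T^{2^\tau}$, with $\pi_0$ any extension of $\varphi$ and $\rho$ a monomorphism of $G$, does give an injective map, but the set $\{(\varphi(s),\rho(s)):s\in H\cap S\}$ need not be dense in the product even though its first coordinate is dense in $\T^{2^\tau}$. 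Keeping the target equal to $\T^{2^\tau}$ itself — rather than a proper power of it — and exploiting that splitting off a ``small'' divisible summand changes $\T^{2^\tau}$ by nothing up to isomorphism is precisely what avoids this. I would also note that here $S$ may be merely countably infinite (only $\tau$ is assumed infinite), so no lower bound on $|S|$ beyond what Theorem~\ref{HMP} delivers may be used.
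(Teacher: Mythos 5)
Your proposal is correct, and its overall architecture coincides with the paper's: obtain $H$ and $\varphi$ from Theorem~\ref{HMP}, extend $\varphi$ to all of $G$, and observe that density of $\pi(S)$ is automatic from $\varphi(H\cap S)\subseteq\pi(S)$. For item (i) your argument (injectivity of the divisible group $\T^{2^\tau}$) is literally the paper's. The only divergence is in item (ii): the paper does not argue the monomorphic extension by hand but simply checks the hypotheses of a ready-made extension lemma (\cite[Lemma 3.17]{DS-Forcing}), namely $|H|=\tau<2^{2^\tau}$ and $r_p(G)\le|G|\le 2^{2^\tau}=r_p\bigl(\T^{2^\tau}\bigr)$ for all $p\in\mathbb{P}\cup\{0\}$, whereas you in effect re-prove that lemma: pass to a divisible hull $D\supseteq G$, split off a divisible hull $\hat H$ of $H$, extend $\varphi$ to a monomorphism $\hat\varphi$ on $\hat H$ via essentiality, split $\T^{2^\tau}=\hat\varphi(\hat H)\oplus C$, and embed the complement of $\hat H$ into $C$ by comparing the invariants $r_0$ and $r_p$ via the structure theorem for divisible groups. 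The counting that makes this work ($\tau<2^{2^\tau}$ together with $r_0=r_p=2^{2^\tau}$ for $\T^{2^\tau}$) is exactly the counting behind the cited lemma, so the two proofs rest on the same facts; yours buys self-containedness at the cost of invoking the structure theory of divisible groups explicitly, while the paper's is shorter by delegation. Your closing remarks (why the naive product trick $\pi=(\pi_0,\rho)$ fails, and that $S$ need not be uncountable) are accurate and not in conflict with anything in the paper.
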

\begin{proof} Let $H$ and $\varphi$ be as in the conclusion of Theorem~\ref{HMP}.  The proof now branches into two cases, depending on which item of our corollary holds.

(i)  Since $\T^{2^\tau}$ is a divisible group, there exists a group homomorphism  $\pi:G\to \T^{2^\tau}$ extending $\varphi$. 

(ii)  Since $|H|= \tau < 2^{2^\tau}$ and $r_p(G)\le |G|\le 2^{2^\tau}=\left|\T^{2^\tau}\right|=r_p\left(\T^{2^\tau}\right)$ for every $p\in\mathbb{P}\cup\{0\}$, we can extend $\varphi$ to a monomorphism  $\pi:G\to \T^{2^\tau}$, see~\cite[Lemma 3.17]{DS-Forcing}. 

Returning back to the common  part of the proof,  note that $\varphi(H\cap S)\subseteq \pi(S)$ and $\varphi(H\cap S)$ is dense in $\T^{2^\tau}$, so $\pi(S)$ must be dense in $\T^{2^\tau}$ as well.
\end{proof}

\begin{theorem}
\label{dense:mappings:into:Tkappa}
Let $\kappa$ be a cardinal such that $\kappa>\cont$, and let $\tau=\log\kappa$. For a subset $S$ of an abelian group $G$, the following conditions are equivalent:
\begin{itemize}
\item[(i)] there exists a group homomorphism $\varpi:G\to \T^\kappa$ such that $\varpi(S)$ is dense in  $\T^\kappa$;
\item[(ii)] $\tau\le \min\{|nS|:n\in\N\}$;
\item[(iii)] $S$ is $\tau$-wide.
\end{itemize}
\end{theorem}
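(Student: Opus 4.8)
The plan is to prove the cycle of implications (iii)$\Rightarrow$(i)$\Rightarrow$(ii)$\Rightarrow$(iii), using Theorem~\ref{HMP} and Corollary~\ref{main:corollary} to handle the hard direction and elementary cardinality/character arguments for the rest. Since $\kappa>\cont$, we have $\tau=\log\kappa\ge\omega$ and, crucially, $\tau$ is uncountable: indeed, $\kappa>\cont=2^\omega$ forces $\tau=\log\kappa>\omega$. This lets us invoke part (i) of Proposition~\ref{tau-wide:remark}, so conditions (ii) and (iii) are literally equivalent once we know $S$ is uncountable. But note that if $\min\{|nS|:n\in\N\}\ge\tau>\omega$ then $S$ itself is uncountable (taking $n=1$), and conversely (iii) with uncountable $\tau$ forces $S$ uncountable; and if $S$ is countable then both (ii) and (iii) fail while (i) also fails (see below), so the equivalence (ii)$\Leftrightarrow$(iii) holds in all cases. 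I would dispatch this first, citing Proposition~\ref{tau-wide:remark}(i) and noting the trivial countable case separately.

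Next, (iii)$\Rightarrow$(i): assume $S$ is $\tau$-wide. By Corollary~\ref{main:corollary}(i) there is a homomorphism $\pi:G\to\T^{2^\tau}$ with $\pi(S)$ dense in $\T^{2^\tau}$. Since $\kappa\le 2^\tau$ by definition of $\tau=\log\kappa$, fix a surjection (equivalently, a coordinate projection after padding) and compose: let $p:\T^{2^\tau}\to\T^\kappa$ be the projection onto $\kappa$ of the $2^\tau$ coordinates. Then $p$ is a continuous surjective homomorphism, so $\varpi=p\circ\pi:G\to\T^\kappa$ has $\varpi(S)=p(\pi(S))$ dense in $\T^\kappa$ because the continuous image of a dense set under a surjection is dense. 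This gives (i).

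Finally, (i)$\Rightarrow$(iii): suppose $\varpi:G\to\T^\kappa$ has $\varpi(S)$ dense in $\T^\kappa$. I argue the contrapositive of (iii), or directly: it suffices to show $\min\{|nS|:n\in\N\}\ge\tau$, and then apply Proposition~\ref{tau-wide:remark}(i) (having first checked $S$ is uncountable: a countable $\varpi(S)$ cannot be dense in $\T^\kappa$ since $\kappa>\cont$ makes $\T^\kappa$ nonseparable, so $S$ is uncountable). Fix $n\in\N$ and put $A=nS$; then $\varpi(A)=n\,\varpi(S)$ is dense in $n\,\T^\kappa=\T^\kappa$ (the circle, hence its power, is divisible, so multiplication by $n$ is surjective, and it is an open map, so it carries the dense set $\varpi(S)$ onto a dense set). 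Thus $\T^\kappa$ has a dense subset of size $\le|A|$, so $w(\T^\kappa)\le 2^{|A|}$ by the Hewitt-Marczewski-Pondiczery theorem applied in reverse — more precisely, a space with a dense set of size $\lambda$ embeds nothing of weight exceeding $2^\lambda$, and $w(\T^\kappa)=\kappa$. Hence $\kappa\le 2^{|A|}$, which by minimality of $\tau=\log\kappa$ gives $|A|\ge\tau$, i.e. $|nS|\ge\tau$. As $n$ was arbitrary, (ii) holds, and then (iii) follows.

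The main obstacle is the density/openness bookkeeping in (i)$\Rightarrow$(iii): one must be careful that it is genuinely $w(\T^\kappa)=\kappa$ that is bounded by $2^{|nS|}$, and that the estimate $d(\T^\kappa)>\tau$ would contradict density of $\varpi(S)$, rather than merely $d(\T^\kappa)\ge\omega$. The clean way is the density-character inequality $\log w(X)\le d(X)$ valid for, say, powers of a space with at least two points in each factor separated by the topology (here $\T$), giving $\tau=\log\kappa=\log w(\T^\kappa)\le d(\T^\kappa)\le|\varpi(nS)|\le|nS|$. Everything else is routine once $\tau$ is known to be uncountable, which is exactly where the hypothesis $\kappa>\cont$ is used.
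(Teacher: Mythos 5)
Your proposal is correct and follows essentially the same route as the paper: the implication (iii)$\Rightarrow$(i) via Corollary~\ref{main:corollary}(i) and the projection $\T^{2^\tau}\to\T^\kappa$, the implication (i)$\Rightarrow$(ii) via divisibility of $\T^\kappa$ and the bound $w(X)\le 2^{d(X)}$ for Tychonoff spaces (which is the content of Lemma~\ref{necessary:condition}, not really ``HMP in reverse''), and (ii)$\Rightarrow$(iii) via uncountability of $\tau$ and Proposition~\ref{tau-wide:remark}(i). The extra remarks about openness of multiplication by $n$ and the separate countable-$S$ discussion are harmless but unnecessary, since continuity plus surjectivity and the cyclic implication structure already suffice.
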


\begin{proof} (i)$\to$(ii) 
Let $n\in \N$.  Since $\pi(S)$ is  dense in $\T^\kappa$, $w(n\T^\kappa)\le 2^{|nS|}$ by Lemma \ref{necessary:condition}(ii) below. Since $\T^\kappa$ is divisible, $n\T^\kappa=\T^\kappa$, and so $\kappa=w(\T^\kappa)\le 2^{|nS|}$, which yields $\tau=\log\kappa\le |nS|$. This proves (ii).

(ii)$\to$(iii)  Since $\kappa>\cont$, $\tau$ is uncountable. Applying (ii) with $n=1$, we conclude that $S$ is uncountable. Then $S$ is $\tau$-wide by (ii) and   Proposition~\ref{tau-wide:remark}(i).

(iii)$\to$(i) Let $\pi$ be as in the conclusion of  Corollary~\ref{main:corollary}(i). From $\tau=\log \kappa$ it follows that $\kappa\le 2^\tau$. Let $\psi:\T^{2^\tau}\to\T^\kappa$ be the projection on the first $\kappa$ coordinates.  Since $\psi$ is a homomorphism, so is $\varpi=\psi\circ \pi: G\to \T^\kappa$. Since $\psi$ is continuous and $\pi(S)$ is dense in $\T^{2^\tau}$, the set $\psi(\pi(S))=\varpi(S)$ is dense in $\psi(\T^{2^\tau})=\T^\kappa$.
\end{proof}

\begin{theorem}
\label{dense:monomorphisms:into:Tkappa} Let $\kappa$ be  a cardinal such that $\kappa>\cont$, and let $\tau=\log\kappa$. For a subset $S$ of an abelian group $G$, the following conditions are equivalent:
\begin{itemize}
\item[(i)] there exists a monomorphism $\varpi:G\to \T^\kappa$ such that $\varpi(S)$ is dense in  $\T^\kappa$;
\item[(ii)] $|G|\le 2^\kappa$ and $\tau\le \min\{|nS|:n\in\N\}$;
\item[(iii)] $|G|\le 2^\kappa$ and $S$ is $\tau$-wide.
\end{itemize}
\end{theorem}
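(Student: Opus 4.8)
The plan is to establish the cycle $(i)\Rightarrow(ii)\Rightarrow(iii)\Rightarrow(i)$, drawing on Theorems~\ref{HMP} and~\ref{dense:mappings:into:Tkappa}, Proposition~\ref{tau-wide:remark}, and the monomorphism extension lemma \cite[Lemma~3.17]{DS-Forcing} (the one already applied in the proof of Corollary~\ref{main:corollary}(ii)). Note first that $\kappa>\cont=2^\omega$ forces $\tau=\log\kappa$ to be uncountable, that $|\T^\kappa|=\cont^\kappa=2^\kappa$, and that $r_p(\T^\kappa)=|\T^\kappa|=2^\kappa$ for every $p\in\mathbb{P}\cup\{0\}$. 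For $(i)\Rightarrow(ii)$: a monomorphism $\varpi$ as in (i) is in particular a homomorphism as in Theorem~\ref{dense:mappings:into:Tkappa}(i), so $\tau\le\min\{|nS|:n\in\N\}$ by that theorem, while injectivity of $\varpi$ gives $|G|=|\varpi(G)|\le|\T^\kappa|=2^\kappa$. For $(ii)\Rightarrow(iii)$: the clause $|G|\le 2^\kappa$ is common to both, and applying (ii) with $n=1$ yields $|S|\ge\tau>\omega$, so $S$ is uncountable and hence $\tau$-wide by Proposition~\ref{tau-wide:remark}(i).

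The real content is $(iii)\Rightarrow(i)$. I would start from Theorem~\ref{HMP}, which --- since $S$ is $\tau$-wide --- produces a subgroup $H\le G$ with $|H|=\tau$ together with a monomorphism $\varphi:H\to\T^{2^\tau}$ such that $\varphi(H\cap S)$ is dense in $\T^{2^\tau}$. Because $\tau=\log\kappa$ gives $\kappa\le 2^\tau$, I want to transfer this picture into $\T^\kappa$ via a coordinate projection $\psi_A:\T^{2^\tau}\to\T^A$ for a suitable $A\subseteq 2^\tau$ with $|A|=\kappa$. For \emph{any} such $A$ the set $\psi_A(\varphi(H\cap S))$ is dense in $\T^A\cong\T^\kappa$, being the image of a dense set under a continuous surjection; the only thing that can go wrong is injectivity of $\psi_A\circ\varphi$. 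This is where the smallness of $H$ is used: the subgroup $D:=\varphi(H)$ has size only $\tau\le\kappa$, so for each nonzero $d\in D$ one may pick a coordinate $\beta(d)\in 2^\tau$ with the $\beta(d)$-th component of $d$ nonzero and then require $A\supseteq\{\beta(d):0\ne d\in D\}$, a set of size at most $\tau\le\kappa$. Then $\psi_A$ is injective on $D$, so $\rho:=\psi_A\circ\varphi:H\to\T^A\cong\T^\kappa$ is a monomorphism with $\rho(H\cap S)$ dense in $\T^\kappa$.

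Finally, since $|H|=\tau<2^\kappa$ and $r_p(G)\le|G|\le 2^\kappa=|\T^\kappa|=r_p(\T^\kappa)$ for every $p\in\mathbb{P}\cup\{0\}$, the monomorphism $\rho$ extends to a monomorphism $\varpi:G\to\T^\kappa$ by \cite[Lemma~3.17]{DS-Forcing}; as $\varpi(S)\supseteq\varpi(H\cap S)=\rho(H\cap S)$ is dense in $\T^\kappa$, the set $\varpi(S)$ is dense, and (i) follows. The main obstacle, and the only step requiring care, is the construction of $A$ in $(iii)\Rightarrow(i)$: one must simultaneously keep $\varphi(H\cap S)$ dense (automatic for every coordinate projection) and keep $\varphi$ injective after projecting (which forces $A$ to meet the support of every nonzero element of $\varphi(H)$) --- feasible precisely because $\varphi(H)$ was kept of size $\tau\le\kappa$. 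In particular, the seemingly more direct route of first extending $\varphi$ to all of $G$ and only afterwards projecting down to $\T^\kappa$ does not work, since $|G|$ may well exceed $\kappa$.
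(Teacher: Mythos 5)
Your proposal is correct and follows essentially the same route as the paper: (i)$\Rightarrow$(ii) and (ii)$\Rightarrow$(iii) exactly as in (or via) Theorem~\ref{dense:mappings:into:Tkappa}, and (iii)$\Rightarrow$(i) by taking $H$ and $\varphi$ from Theorem~\ref{HMP}, projecting onto $\kappa\le 2^\tau$ coordinates chosen to contain, for each nonzero element of $\varphi(H)$, a coordinate where it is nonzero (the paper's set $\Xi$), and then extending the resulting monomorphism of $H$ to all of $G$ via \cite[Lemma~3.17]{DS-Forcing} using $|H|=\tau<2^\kappa$ and $r_p(G)\le 2^\kappa=r_p(\T^\kappa)$.
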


\begin{proof} (i)$\to$(ii) Clearly, $|G|=|\varpi(G)|\le |\T^\kappa|=2^\kappa$. The other inequality in item (ii)  follows from the implication (i)$\to$(ii) of Theorem~\ref{dense:mappings:into:Tkappa}. 

(ii)$\to$(iii) follows from the implication (ii)$\to$(iii) of Theorem~\ref{dense:mappings:into:Tkappa}.

(iii)$\to$(i) Let $H$ and $\varphi$ be as in the conclusion of Theorem~\ref{HMP}. From $\tau=\log \kappa$ it follows that $\tau\le\kappa\le 2^\tau$.

For every $h\in H\setminus \{0\}$, there exists $\xi_h\in 2^\tau$ such that $\varphi(h)(\xi_h)\not=0$. Define  $\Xi=\{\xi_h\in 2^\tau :h\in H\setminus \{0\}\}\subseteq 2^\tau$. 
Let $\mu: \T^{2^\tau}\to \T^\Xi$ be the projection. Then $\mu\restriction_{\varphi(H)}:\varphi(H)\to \T^\Xi$ is a monomorphism. Since  $|\Xi|\le |H\setminus \{0\}|\le |H|=\tau\le\kappa$, it follows that there exists a homomorphism $\psi:\T^{2^\tau}\to\T^\kappa$ such that $\chi=\psi\circ \varphi: H\to \T^\kappa$  is a monomorphism.
Since $\psi$ is continuous and $\varphi(H\cap S)$ is dense in $\T^{2^\tau}$, the set $\psi(\varphi(H\cap S))=\chi(H\cap S)$ is dense in $\psi(\T^{2^\tau})=\T^\kappa$.

Since $|H|= \tau \le\kappa< 2^{\kappa}$ and $r_p(G)\le |G|\le 2^{\kappa}=|\T^{\kappa}|=r_p(\T^{\kappa})$ for every $p\in\mathbb{P}\cup\{0\}$, we can extend $\chi$ to a monomorphism  $\varpi:G\to \T^{\kappa}$; see \cite[Lemma 3.17]{DS-Forcing}. Since $\chi(H\cap S)\subseteq \varpi(S)$  and $\chi(H\cap S)$ is dense in $\T^{\kappa}$, the set $\varpi(S)$ must be dense in $\T^{\kappa}$ as well.
\end{proof}

The counter-part of Theorems~\ref{dense:mappings:into:Tkappa} and~\ref{dense:monomorphisms:into:Tkappa} for $\kappa\le\cont$ is proved in~\cite{DS_Kronecker}.

\section{Markov's potential density}\label{potential:density:section}

We start this section with a simple necessary condition for potential density.

\begin{lemma}
\label{necessary:condition}
Let $S$ be a dense subset of a Hausdorff group $G$. Then:
\begin{itemize}
\item[(i)]  for every  $n\in\N$, the set $nS$ is dense in the subgroup $nG$ of $G$;
\item[(ii)] $w(nG)\le 2^{|nS|}$ and $|nG|\le 2^{2^{|nS|}}$ for each $n\in\N$. 
\end{itemize}
\end{lemma}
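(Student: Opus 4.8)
The plan is to prove both items by first establishing (i) and then deducing the cardinality bounds in (ii) from (i) together with standard facts about dense subsets of topological spaces. For item (i), fix $n\in\N$ and recall that the map $g\mapsto ng$ is a continuous group endomorphism $\mu_n:G\to G$ with image $nG$. Since $S$ is dense in $G$ and $\mu_n$ is continuous, $\mu_n(\overline{S})\subseteq\overline{\mu_n(S)}$, so $nG=\mu_n(G)=\mu_n(\overline{S})\subseteq\overline{nS}$, where the closure is taken in $G$; intersecting with $nG$ shows $nS$ is dense in the subspace $nG$. No separation axiom beyond what is assumed is needed here.

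For item (ii), I would invoke the standard inequalities relating the weight and cardinality of a topological group to the size of a dense subset. First, for the weight: a Hausdorff topological group $K$ with a dense subset $D$ satisfies $w(K)\le 2^{|D|}$. One way to see this is to note that $K$ embeds into its Ra\u\i kov (or, for the present purposes, it is enough to work with a precompact refinement / the Bohr-type) completion, or more simply to use that a Hausdorff group of density character $\le\lambda$ has weight $\le 2^\lambda$: the density character of $K$ is at most $|D|$, and for topological groups $w(K)\le 2^{d(K)}$ (this is a classical fact; see Comfort's survey or Arhangel'ski\u\i--Tkachenko). Applying this with $K=nG$ (with the subspace topology, which is again a Hausdorff topological group) and $D=nS$ gives $w(nG)\le 2^{|nS|}$. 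Second, for the cardinality: any Hausdorff space $X$ satisfies $|X|\le 2^{w(X)}$ (indeed $|X|\le 2^{d(X)\cdot\chi(X)}$, and for groups $\chi(K)=w(K)$... but more cleanly $|X|\le 2^{w(X)}$ for Hausdorff $X$ because distinct points are separated by members of a base). Combining, $|nG|\le 2^{w(nG)}\le 2^{2^{|nS|}}$.

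The only genuine subtlety — and the step I would be most careful about — is making sure the passage to the subgroup $nG$ with its subspace topology is legitimate at each stage: $nG$ need not be closed in $G$, so it is not itself compact-like, but it is a Hausdorff topological group in the subspace topology, $nS\subseteq nG$ is dense in it by part (i), and both cited inequalities ($w\le 2^d$ for Hausdorff groups, $|X|\le 2^{w(X)}$ for Hausdorff spaces) apply to this subspace without any further hypotheses. Everything else is a direct substitution. I would present (i) in two lines, then (ii) by quoting the two inequalities and chaining them, noting that when $nS$ is finite the bounds are trivial so one may assume $|nS|\ge\omega$ if one prefers to state the weight inequality only for infinite dense sets.
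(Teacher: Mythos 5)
Your proof is correct and follows essentially the same route as the paper: part (i) via continuity of $g\mapsto ng$, and part (ii) by applying standard cardinal inequalities for dense subsets to the Hausdorff (hence regular) group $nG$, whose dense subset $nS$ comes from (i). The only cosmetic difference is that the paper invokes the Hausdorff-space inequality $|X|\le 2^{2^{d(X)}}$ directly for the cardinality bound, while you chain $|nG|\le 2^{w(nG)}$ with the weight estimate $w(nG)\le 2^{|nS|}$ --- both are valid.
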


\begin{proof} (i)  The map $g\mapsto ng$ that sends $G$ to the subgroup $nG$ of $G$ is continuous. Since $S$ is dense in $nG$, $nS$ must be dense in $nG$.

(ii) Let $n\in \N$. From (i) we conclude that $nS$ is a dense subset of  the Tychonoff space $nG$. Therefore, $w(nG)\le 2^{|nS|}$ by \cite[Theorem 1.5.7]{Eng}  and $|nG|\le 2^{2^{|nS|}}$ by \cite[Theorem 1.5.3]{Eng}. 
\end{proof}

\begin{corollary}
\label{necessary:condition:corollary} If $S$ is a potentially dense subset of a group $G$, then 
\begin{equation}
\label{necessary:condition:equation}
\mbox{$\log \log |nG| \leq |nS|$ \ for all $n\in\N$.}
\end{equation}
\end{corollary}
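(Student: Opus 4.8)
The plan is to derive $(\ref{necessary:condition:equation})$ directly from Lemma~\ref{necessary:condition}(ii). Unwinding the definition of potential density, I would fix a Hausdorff group topology $\mathscr{T}$ on $G$ in which $S$ is dense, fix $n\in\N$, and apply Lemma~\ref{necessary:condition}(ii) to the Hausdorff group $(G,\mathscr{T})$ and its dense subset $S$; this delivers $|nG|\le 2^{2^{|nS|}}$.

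The rest is pure cardinal arithmetic with the operation $\log$. First I would dispose of the trivial case: if $nG$ is finite, then $nS\subseteq nG$ is finite as well and $(\ref{necessary:condition:equation})$ holds trivially. So assume $nG$ is infinite. By Lemma~\ref{necessary:condition}(i) the set $nS$ is dense in $nG$, and since $nG$ is Hausdorff --- hence $T_1$, so that finite subsets are closed --- the set $nS$ must be infinite; consequently $|nS|$, $2^{|nS|}$, $|nG|$ and $\log|nG|$ are all infinite, and in particular the double logarithm $\log\log|nG|$ is well defined. Now $|nG|\le 2^{(2^{|nS|})}$ gives $\log|nG|\le 2^{|nS|}$ straight from the definition of $\log$, and a second application of $\log$, using its monotonicity together with the fact that $\log(2^\lambda)\le\lambda$ for every infinite cardinal $\lambda$ (obtained by taking $\tau=\lambda$ in the defining minimum), yields $\log\log|nG|\le\log(2^{|nS|})\le|nS|$, which is exactly $(\ref{necessary:condition:equation})$.

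I do not anticipate a genuine obstacle here: the substance is entirely contained in Lemma~\ref{necessary:condition}, and the only point deserving a moment's attention is keeping every application of $\log$ confined to infinite cardinals, i.e. recording that ``$nG$ infinite'' forces ``$nS$ infinite'' --- which both legitimizes the two-fold $\log$ and simultaneously absorbs the degenerate finite configurations.
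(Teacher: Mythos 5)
Your proposal is correct and follows exactly the route the paper intends: the corollary is an immediate consequence of Lemma~\ref{necessary:condition}(ii), namely $|nG|\le 2^{2^{|nS|}}$, combined with the monotonicity of $\log$ and the inequality $\log(2^\lambda)\le\lambda$. Your extra care about the finite case and about $nS$ being infinite whenever $nG$ is infinite is a harmless refinement of the same argument, not a different approach.
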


Our next theorem provides a general sufficient condition for potential density that also allows it to be realized by some precompact group topology.

\begin{theorem}
\label{sufficient:condition:for:potential:density}
Let $\tau$ be an infinite  cardinal and $S$ a $\tau$-wide subset of an abelian group $G$ such that $|G|\le 2^{2^\tau}$.
Then there exist a precompact Hausdorff group topology $\mathscr{T}$ on $G$  such that $S$ is dense in $(G,\mathscr{T})$. 
\end{theorem}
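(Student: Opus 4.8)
The plan is to deduce this theorem directly from Corollary~\ref{main:corollary}(ii) by pulling back the topology of $\T^{2^\tau}$ along a suitable monomorphism, so that essentially no new work is needed beyond what is already available.

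First, since $S$ is $\tau$-wide and $|G|\le 2^{2^\tau}$, Corollary~\ref{main:corollary}(ii) provides a monomorphism $\pi:G\to\T^{2^\tau}$ such that $\pi(S)$ is dense in $\T^{2^\tau}$. I would then let $\mathscr{T}$ be the initial topology on $G$ with respect to $\pi$, i.e.\ the coarsest topology on $G$ making $\pi$ continuous; since $\pi$ is a group homomorphism, the preimages under $\pi$ of a neighbourhood base at $0$ in $\T^{2^\tau}$ form a neighbourhood base at $0$ for $\mathscr{T}$, and $\mathscr{T}$ is a group topology.

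Next I would verify the three properties required of $(G,\mathscr{T})$. It is Hausdorff because $\pi$ is injective and $\T^{2^\tau}$ is Hausdorff: distinct $g,g'\in G$ have distinct images $\pi(g),\pi(g')$, which can be separated by open sets in $\T^{2^\tau}$, and the $\pi$-preimages of these sets separate $g$ and $g'$ in $(G,\mathscr{T})$. It is precompact because, by the very definition of the initial topology, $\pi$ is a topological isomorphism of $(G,\mathscr{T})$ onto the subgroup $\pi(G)$ of $\T^{2^\tau}$ equipped with the subspace topology; since $\T^{2^\tau}$ is a compact Hausdorff group, $(G,\mathscr{T})$ is (topologically isomorphic to) a subgroup of a compact Hausdorff group, hence precompact. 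Finally, density transfers easily: as $\pi(S)$ is dense in $\T^{2^\tau}$ and $\pi(G)\subseteq\T^{2^\tau}$, the set $\pi(S)\subseteq\pi(G)$ is dense in $\pi(G)$ with its subspace topology, and transporting this back through the homeomorphism $\pi^{-1}$ shows that $S$ is dense in $(G,\mathscr{T})$, as required.

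There is no real obstacle remaining at this stage — all the substantive content sits in Theorem~\ref{HMP} and Corollary~\ref{main:corollary}. The only point deserving a word of care is the standard fact that the initial topology induced by a group homomorphism into a topological group is itself a group topology, and is Hausdorff precisely when that homomorphism is injective into a Hausdorff target; once this is granted, the proof is a one-line pullback argument.
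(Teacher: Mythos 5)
Your proposal is correct and follows essentially the same route as the paper: obtain the monomorphism $\pi:G\to\T^{2^\tau}$ with $\pi(S)$ dense from Corollary~\ref{main:corollary}(ii), and transport the subspace topology of $\pi(G)$ back to $G$ along $\pi$, which immediately yields precompactness, Hausdorffness, and density of $S$. Your initial-topology formulation is just the paper's pullback $\mathscr{T}=\{\pi^{-1}(U):U\in\mathscr{T}'\}$ stated in slightly different language, so nothing further is needed.
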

\begin{proof}
Let $\pi:G\to \T^{2^\tau}$ be a monomorphism from Corollary~\ref{main:corollary}(ii), and let $\mathscr{T}'$ be the topology $\pi(G)$ inherits from $\T^{2^\tau}$. Then $(G,\mathscr{T}')$ is a precompact group. Since $\pi(S)$ is dense in $\T^{2^\tau}$, we conclude that $\pi(S)$ is $\mathscr{T}'$-dense in $\pi(G)$. Since $\pi$ is an isomorphism between $G$ and $\pi(G)$, $\mathscr{T}=\{\pi^{-1}(U):U\in \mathscr{T}'\}$ is the required topology.
\end{proof}

The characterization of the countable potentially dense  subsets of an abelian group $G$ with $|G|\leq 2^\cont$ can be found in~\cite{DS_Kronecker}.  Our next corollary shows that Theorem~\ref{sufficient:condition:for:potential:density} is sufficiently useful in obtaining some particular cases of that characterization.

\begin{corollary}
Let $G$ be an unbounded torsion abelian group with $|G|\leq 2^\cont$.  For a countable subset $S$ of $G$,  the following conditions are equivalent:
\begin{itemize}
\item[(i)] $S$ is potentially dense in $G$;
\item[(ii)] there exist a precompact Hausdorff group topology $\mathscr{T}$ on $G$  such that $S$ is dense in $(G,\mathscr{T})$;
\item[(iii)] $S$ is $\omega$-wide.
 \end{itemize}
\end{corollary}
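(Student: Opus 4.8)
The plan is to establish the cycle of implications (ii)$\to$(i)$\to$(iii)$\to$(ii). The first of these is immediate: a precompact Hausdorff group topology is in particular a Hausdorff group topology, so (ii) trivially yields (i). For (iii)$\to$(ii) I would simply feed the hypotheses into Theorem~\ref{sufficient:condition:for:potential:density} with $\tau=\omega$: the assumption $|G|\le 2^\cont$ is precisely $|G|\le 2^{2^\omega}=2^{2^\tau}$, and ``$\omega$-wide'' is ``$\tau$-wide'', so the theorem hands us a precompact Hausdorff group topology $\mathscr{T}$ on $G$ in which $S$ is dense.

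The only implication that needs an argument is (i)$\to$(iii). I would first record the elementary observation that, because $G$ is torsion and unbounded, $nG$ is infinite for every $n\in\N$: a finite subgroup $nG$ is bounded, say by some $m\in\N$, whence $mnG=\{0\}$, contradicting the unboundedness of $G$. Now assume $S$ is dense in $(G,\mathscr{T})$ for some Hausdorff group topology $\mathscr{T}$. By Lemma~\ref{necessary:condition}(i), for every $n\in\N$ the set $nS$ is dense in the subspace $nG$, which is Hausdorff and infinite; since finite subsets of a Hausdorff (even $T_1$) space are closed, a finite dense subset of $nG$ is impossible, so $nS$ is infinite for every $n\in\N$. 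By Proposition~\ref{tau-wide:remark}(iii) this says exactly that $S$ is $\omega$-wide, i.e.\ (iii) holds. (Countability of $S$ is not used anywhere in the proof; it appears in the statement only so that the corollary may be compared directly with the characterization of countable potentially dense sets from~\cite{DS_Kronecker}.)

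I do not expect any real obstacle here, since the substance has been absorbed into Theorem~\ref{sufficient:condition:for:potential:density} (for the hard direction (iii)$\to$(ii)) and into Lemma~\ref{necessary:condition} (for the easy direction (i)$\to$(iii)); the only mild point to get right is the observation that in an unbounded torsion abelian group every $nG$ is infinite, which is what keeps the density argument from degenerating.
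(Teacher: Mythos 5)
Your proposal is correct and follows essentially the same route as the paper: (ii)$\to$(i) trivial, (iii)$\to$(ii) via Theorem~\ref{sufficient:condition:for:potential:density} with $\tau=\omega$, and (i)$\to$(iii) via Lemma~\ref{necessary:condition}(i) together with Proposition~\ref{tau-wide:remark}(iii), using unboundedness to rule out a finite $nG$. The paper merely phrases the last step contrapositively (if $nS$ were finite, then $nG$ would be finite and $G$ bounded), which is the same argument as yours, and your remark that countability of $S$ is not actually used matches the paper's proof as well.
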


\begin{proof} The implication (iii)$\to$(ii) follows from Theorem~\ref {sufficient:condition:for:potential:density}, while the 
implication (ii)$\to$(i) is trivial. According to  Proposition~\ref{tau-wide:remark}(iii), to prove the implication (i)$\to$(iii), it suffices to check that  $nS$ is infinite for every $n \in \N$. Assume that $nS$ is finite for some $n \in \N$. By Lemma~\ref{necessary:condition}(i), this yields that $nG$ is finite as well. Consequently, $G$ is bounded,  a contradiction. \end{proof}
In connection with the last corollary, it is worth mentioning that the unboundedness of  a torsion abelian group $G$ is a necessary condition for the existence of an $\omega$-wide subset of $G$; see Proposition~\ref{tau-wide:remark}(iii).

\begin{corollary}
\label{nG:G}
Let  $G$ be an abelian group such that $|nG|=|G|$ for every $n\in \N$. For an uncountable subset $S$ of $G$, the following conditions are equivalent:
\begin{itemize}
  \item[(i)]   $S$ is potentially dense in $G$;
  \item[(ii)]  $S$ is $\mathscr{T}$-dense in some precompact group topology $\mathscr{T}$ on $G$;
  \item[(iii)] $\log\log|nG|\le |nS|$ for every $n\in \N$;
  \item[(iv)] $\log\log|G|\le \min\{|nS|:n \in \N\}$.
\end{itemize}
\end{corollary}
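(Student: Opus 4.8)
The plan is to prove Corollary~\ref{nG:G} by chaining the implications (iv)$\to$(ii)$\to$(i)$\to$(iii)$\to$(iv), using the hypothesis $|nG|=|G|$ for all $n\in\N$ to collapse most of the quantifiers. First I would observe that under this hypothesis, for every $n\in\N$ we have $\log\log|nG|=\log\log|G|$, so conditions (iii) and (iv) are literally equivalent: (iv) clearly implies (iii), and conversely if (iii) holds then $\log\log|G|=\log\log|nG|\le|nS|$ for every $n$, hence $\log\log|G|\le\min\{|nS|:n\in\N\}$. So the only real content is the cycle (iv)$\to$(ii)$\to$(i)$\to$(iii).

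For (iv)$\to$(ii), set $\tau=\log\log|G|$, so that $|G|\le 2^{2^\tau}$ and, by (iv), $\tau\le|nS|$ for every $n\in\N$; moreover $S$ is uncountable so $\tau$ is uncountable as well (since if $\tau=\omega$ then $|G|\le 2^{2^\omega}=2^\cont$ but that does not force $\tau$ uncountable --- here I need to be slightly careful: actually $\tau$ need not be uncountable, but since $S$ is uncountable and $|nS|\le|S|$, we always have $\tau\le|S|$; the point is that whether or not $\tau$ is uncountable, Proposition~\ref{tau-wide:remark}(i) applies because $S$ is uncountable). Thus $\min\{|nS|:n\in\N\}\ge\tau$, and by Proposition~\ref{tau-wide:remark}(i) the set $S$ is $\tau$-wide. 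Now Theorem~\ref{sufficient:condition:for:potential:density} applies directly: since $S$ is $\tau$-wide and $|G|\le 2^{2^\tau}$, there is a precompact Hausdorff group topology $\mathscr{T}$ on $G$ in which $S$ is dense. This gives (ii). The implication (ii)$\to$(i) is immediate from the definition of potential density. Finally, (i)$\to$(iii) is exactly Corollary~\ref{necessary:condition:corollary}.

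I expect the main (though still minor) obstacle to be bookkeeping around the cardinal $\tau=\log\log|G|$: one must confirm that the hypotheses of Theorem~\ref{sufficient:condition:for:potential:density} are met, namely that $|G|\le 2^{2^\tau}$ (which holds by the definition of $\log$) and that $S$ is genuinely $\tau$-wide (which needs $S$ uncountable, given, plus the inequality $\min\{|nS|:n\in\N\}\ge\tau$ from (iv), invoking Proposition~\ref{tau-wide:remark}(i)). There is also the degenerate possibility that $G$ is finite or that some $|nG|$ is small, but the hypothesis $|nG|=|G|$ for all $n$ together with $S$ uncountable forces $|G|$ to be uncountable, so $\tau$ is well-defined and infinite, and all the logarithms behave. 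No new ideas beyond the already-established Theorem~\ref{sufficient:condition:for:potential:density} and Corollary~\ref{necessary:condition:corollary} are needed; the corollary is essentially a clean restatement of the sufficient condition in the special case where the sequence $|nG|$ is constant.
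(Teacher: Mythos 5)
Your proposal is correct and follows essentially the same route as the paper: (ii)$\to$(i) trivially, (i)$\to$(iii) via Corollary~\ref{necessary:condition:corollary}, (iii)$\leftrightarrow$(iv) from the hypothesis $|nG|=|G|$, and (iv)$\to$(ii) by noting $S$ is $\tau$-wide for $\tau=\log\log|G|$ (Proposition~\ref{tau-wide:remark}(i), using that $S$ is uncountable) and invoking Theorem~\ref{sufficient:condition:for:potential:density}. The cardinal bookkeeping you worry about resolves exactly as you say, so nothing further is needed.
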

\begin{proof}
The implication (ii)$\to$(i) is obvious, the  implication (i)$\to$(iii) is proved in Corollary~\ref{necessary:condition:corollary},
and the implication (iii)$\to$(iv) holds due to our assumption on $G$. It remains only to check the implication (iv)$\to$(ii). From (iv) and Proposition~\ref{tau-wide:remark}(i) we conclude that $S$  is $\tau$-wide for $\tau=\log\log|G|$. Since $|G|\le 2^{2^\tau}$, Theorem~\ref{sufficient:condition:for:potential:density} applies.
\end{proof}

\begin{corollary}
\label{unified:corollary}
Let $G$ be an abelian group satisfying one of the following conditions:
\begin{itemize}
\item[(a)] $G$ is divisible;
\item[(b)] $|t(G)|<|G|$;
\item[(c)] $|t(G)|\le\omega$;
\item[(d)] $G$ is almost torsion-free.
\end{itemize}
For every uncountable subset $S$ of $G$, conditions (i)--(iv) of Corollary~\ref{nG:G} are equivalent.
\end{corollary}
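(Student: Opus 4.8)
The plan is to \emph{deduce} Corollary~\ref{unified:corollary} from Corollary~\ref{nG:G}: I will show that each of the hypotheses (a)--(d), in the presence of an uncountable subset $S\subseteq G$, forces $|nG|=|G|$ for all $n\in\N$, which is precisely the hypothesis under which Corollary~\ref{nG:G} asserts the equivalence of (i)--(iv).

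First I would make two elementary reductions. Since $G$ contains the uncountable set $S$, the group $G$ is infinite. For $n\in\N$, let $G[n]=\{g\in G:ng=0\}$; this is a subgroup of $t(G)$, and multiplication by $n$ induces an isomorphism $G/G[n]\cong nG$, so that $|G|=|G[n]|\cdot|nG|$ as cardinals. Because $|G|$ is infinite, this common value equals $\max\{|G[n]|,|nG|\}$; hence, to obtain $|nG|=|G|$ it suffices to verify that $|G[n]|<|G|$ for every $n\in\N$.

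It remains to check the latter inequality in each of the four cases. In case (a) there is nothing to do, since $nG=G$ for every $n\in\N$. In case (c), the uncountability of $G$ gives $|t(G)|\le\omega<|G|$, so (c) is subsumed by (b); and in case (b) one has $|G[n]|\le|t(G)|<|G|$ at once. The only case that uses a structural fact is (d): writing $n=\prod_{p\mid n}p^{k_p}$, the group $G[n]$ is the direct sum of the finitely many groups $t_p(G)[p^{k_p}]$ ($p$ ranging over the primes dividing $n$, with $t_p(G)$ the $p$-primary component of $t(G)$), and each such summand is an abelian $p$-group bounded by $p^{k_p}$ whose $p$-rank does not exceed $r_p(G)<\omega$. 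By Pr\"ufer's theorem a bounded abelian $p$-group is a direct sum of cyclic groups, so one of finite $p$-rank is finite; hence $G[n]$ is finite and in particular $|G[n]|<|G|$. The only non-formal point in the whole argument is this last remark about bounded $p$-groups of finite rank; everything else is routine cardinal arithmetic together with a single appeal to Corollary~\ref{nG:G}, so I expect no real obstacle --- the corollary is essentially a catalogue of natural situations in which the hypothesis $|nG|=|G|$ of Corollary~\ref{nG:G} holds automatically.
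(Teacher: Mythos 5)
Your proposal is correct and follows essentially the same route as the paper: both verify the hypothesis $|nG|=|G|$ of Corollary~\ref{nG:G} by bounding the kernel of multiplication by $n$ and using that $G$ is uncountable (cases (a)--(c) are handled identically). The only divergence is case (d): the paper notes that an almost torsion-free group has countable torsion part, reducing (d) to (c) and then to (b), whereas you show directly that $G[n]$ is finite (its $p$-components are bounded of finite rank, by Pr\"ufer's theorem); both arguments are sound.
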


\begin{proof}
Let $S$ be an uncountable subset of $G$. In particular, $|G|\ge |S|>\omega$.
It suffices to show that $G$ satisfies the assumption  of Corollary~\ref{nG:G}.  Fix $n\in\N$. If (a) holds, then $|nG|=|G|$ trivially holds,  as the homomorphism
$\eta_n: G\to G$ defined by $\eta_n(g)=ng$ for $g\in G$, is surjective. If (b) holds, then from  $\ker\eta_n\subseteq t(G)$ and $|t(G)|< |G|$ it follows that $|nG|=|G|$. If (c) holds, then $t(G)\le \omega < |G|$,  and so (c)
is a particular case of (b).  Finally, note that every almost torsion-free group satisfies (c).
\end{proof}

\begin{remark}\label{last:remark}
For an uncountable subset $S$ of an abelian group $G$ satisfying items (c) or (d) of Corollary~\ref{unified:corollary}, the following simplified condition can be added to the list of equivalent items (i)--(iv) of Corollary
\ref{nG:G}:
\begin{itemize}
\item[(v)] $\log\log|G|\le |S|$.
\end{itemize}
Indeed,  since $t(G)$ is at most countable and $S$ is uncountable, we must have $|nS|=|S|$ for every $n\in\N$. This establishes the implication (v)$\to$(iv). The reverse implication (iv)$\to$(v) is trivial. 
\end{remark}

Let $S$ be an uncountable subset of an abelian group $G$. Corollaries~\ref{nG:G} and~\ref{unified:corollary}
show that  (\ref{necessary:condition:equation})  is not only a necessary, but also a sufficient condition  for potential density of 
$S$  in $G$ when $G$ belongs to a wide class of groups including divisible and (almost) torsion-free groups. 
According to Remark~\ref{last:remark}, the weaker (and much simpler) condition $\log \log |G| \leq |S|$, obtained by taking $n=1$ in (\ref{necessary:condition:equation}),  is  also sufficient for potential density of $S$ in $G$ when the torsion part $t(G)$  of $G$ is at most countable. Our next example demonstrates that  this simpler condition (v) is no longer sufficient for  potential density of $S$ in $G$
when one weakens the assumption $|t(G)|\le \omega$ to $|t(G)|<|G|$. (Note that $|G|\ge|S|>\omega$.)

\begin{example}
\label{potentially:dense:example}
 Let $\Z(2)=\Z/2\Z$ be the abelian group with two elements. Define $G=\Z(2)^\omega \times \Z^{2^\cont}$ and $S = \Z(2)^\omega \times C$,  where $C$ is any infinite cyclic subgroup of $ \Z^{2^\cont}$. 
Then $\log \log |G|= \cont = |S|$ and  $|t(G)|=|\Z(2)^\omega \times\{0\}|=\cont<|G|$. 
Since $\log \log |2G| > \omega= |2S|$, the set $S$ cannot be
potentially dense in $G$ by Corollary~\ref{necessary:condition:corollary}. 
\end{example}

Observe that that  a group $G$ with the properties from Example~\ref{potentially:dense:example} must necessarily have  
an uncountable torsion part $t(G)$; see Remark~\ref{last:remark}.
 
Example~\ref{potentially:dense:example} provides a negative answer to \cite[Question 45]{DS_OPIT}.

\section{Technical lemmas}

We call a connected open subset $V$ of $\T$ an  {\em open arc\/}, and we use $l(V)$ to denote the length of $V$.

\begin{lemma}
\label{torus:lemma}
Suppose that $V$ is an open arc in $\T$, $z,z'\in \T$, $m,n\in\N$, $1\le n<m$ and $\fraction{2}{m}<l(V)$. Then there exists  $y\in V$ such that $my=z$ and $ny\not=z'$. 
\end{lemma}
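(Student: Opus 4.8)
The plan is to solve the equation $my=z$ first, obtaining the (finitely many) solutions, and then show that the open arc $V$, being long enough, must contain at least two of them; since at most one solution $y$ of $my=z$ can also satisfy $ny=z'$ (as we will see), this gives a solution $y\in V$ with $my=z$ and $ny\neq z'$.

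First I would fix a solution $y_0\in\T$ of $my_0=z$; such a $y_0$ exists because $\T$ is divisible. The full solution set of $my=z$ is then $\{y_0+k/m : k=0,1,\dots,m-1\}$, and these $m$ points are equally spaced around $\T$ at consecutive distance $1/m$. Since $l(V)>2/m$, the arc $V$ contains a subinterval of length $>2/m$, hence contains at least two consecutive points of this arithmetic progression, say $y_1=y_0+k_1/m$ and $y_2=y_0+k_2/m$ with $k_1\neq k_2$ in $\{0,\dots,m-1\}$; both satisfy $my_i=z$. (Concretely: an arc of length strictly greater than $2/m$ meets every coset of the cyclic subgroup $\frac1m\Z/\Z$ of order $m$ in at least two points.)

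Next I would argue that $y_1$ and $y_2$ cannot both satisfy $ny=z'$. Indeed, if $ny_1=z'=ny_2$, then $n(y_1-y_2)=0$, so $y_1-y_2$ is an element of order dividing $n$ in $\T$; but $y_1-y_2=(k_1-k_2)/m$ with $0<|k_1-k_2|<m$, which is a nonzero element of the cyclic group $\frac1m\Z/\Z$ of order exactly $m/\gcd(k_1-k_2,m)$. For $n(y_1-y_2)=0$ we would need $m/\gcd(k_1-k_2,m)$ to divide $n$, and in particular this quotient is at most $n<m$ — which forces $\gcd(k_1-k_2,m)>1$. So this naive counting is not quite enough, and I need to be slightly more careful about which two solutions land in $V$.

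The fix, and the step I expect to be the main (minor) obstacle, is to choose the two solutions in $V$ more cleverly so that their difference has order exactly $m$, or at least order $>n$. Since $V$ has length $>2/m$, it actually contains a subarc of length $>2/m$ on which I can find two solutions $y_0+k_1/m$, $y_0+k_2/m$ with $k_1-k_2\equiv\pm1\pmod m$ — i.e. genuinely consecutive ones in the cyclic order — so that $y_1-y_2=\pm1/m$ has order exactly $m$. Then $n(y_1-y_2)=\pm n/m\neq 0$ in $\T$ because $0<n<m$, so at most one of $y_1,y_2$ satisfies $ny=z'$; the other is the desired $y\in V$. (If one prefers to avoid the ``consecutive'' bookkeeping: the $m\ge 3$ solutions are spread at gaps $1/m$; an arc of length $>2/m$ contains two of them whose indices differ by exactly $1$ mod $m$, because among any $\ge 3$ consecutive lattice points two adjacent ones both fall inside.) This completes the argument, and all the remaining verifications are the routine modular arithmetic sketched above.
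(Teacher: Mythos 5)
Your proof is correct and is essentially the paper's own argument: both pick two \emph{consecutive} solutions of $my=z$ inside $V$ (at distance exactly $1/m$, which exists since $l(V)>2/m$) and observe that their difference cannot be annihilated by $n<m$, so at most one of them can satisfy $ny=z'$. The detour about non-consecutive solutions is harmless, and your final fix coincides with the paper's proof.
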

\begin{proof}
Since $\fraction{2}{m}<l(V)$, the arc $V$ contains two solutions  $y_0$ and $y_1$ of the equation $my=z$ with  
\begin{equation}\label{(*)}
l(C(y_0,y_1))=\fraction{1}{m},
\end{equation}
where $C(y_0,y_1)$ is the shortest arc in $\T$ connecting $y_0$ and $y_1$.

Suppose that $ny_0=ny_{1}=z'$. Then $n(y_0-y_{1})=0$, and so $l(C(y_0,y_1))\ge \fraction{1}{n}$. Together with (\ref{(*)}), this gives $m\le n$, a contradiction. Therefore,  $ny_{j}\not=z'$ for some $j=0,1$, and so we can take that $y_{j}$ as our $y$.
\end{proof}
 
If $G$ and $H$ are groups, then $G\cong H$ means that $G$ and $H$ are isomorphic.

\begin{lemma}
\label{second:lemma}
Let $\tau$ and $\kappa$ be infinite cardinals such that $\tau\le\kappa$. Let $K$ be a subgroup of $\T^\kappa$  such that $|K|\le\tau$. For every $\gamma<\kappa$, let $V_\gamma$ be an open arc in $\T$ such that the set $\{l(V_\gamma):\gamma<\kappa \}$ has a positive lower bound. Then there exists $f\in \prod\{V_\gamma:\gamma<\kappa\}$ having the following properties:
\begin{itemize}
\item[(i)]  $\grp{f}\cong\Z$;
\item[(ii)] $\grp{f}\cap K=\{0\}$.
\end{itemize}
\end{lemma}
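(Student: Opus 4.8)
The plan is to construct $f$ coordinate-by-coordinate via a transfinite recursion of length $\kappa$, at each stage killing one potential relation that would prevent (i) or (ii) from holding. First I would enumerate the "bad events" we must avoid. Writing $K=\{k_\xi:\xi<\tau\}$ (possibly with repetitions, so that the indexing has length exactly $\tau$), the element $f$ must satisfy: (a) $nf\neq 0$ for every $n\in\N$ (this gives $\grp{f}\cong\Z$, since a nonzero element of infinite order generates a copy of $\Z$); and (b) $nf\neq k_\xi$ for every $n\in\Z$ and every $\xi<\tau$ (this gives $\grp{f}\cap K=\{0\}$, because any element of $\grp{f}\cap K$ is of the form $nf$ for some $n\in\Z$ and lies in $K$, hence equals some $k_\xi$; note $n=0$ handles $0\in K$). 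So altogether there are $\tau\le\kappa$ many constraints of the form "$nf\neq z$" where $z$ ranges over $\{0\}\cup K$ and $n$ over $\N$ (for $n=0$ the constraint $0\neq z$ is either automatic or, when $z=0$, is the same as constraint (a) read at a positive $n$, so no loss). Enumerate these $\kappa$-many pairs $(n_\gamma,z_\gamma)_{\gamma<\kappa}$ with each relevant pair appearing, and reindex the coordinates of $\T^\kappa$ by the same $\gamma<\kappa$.

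Next I would run the recursion. Let $\delta>0$ be a positive lower bound for $\{l(V_\gamma):\gamma<\kappa\}$, and fix $m_0\in\N$ with $2/m_0<\delta$. At step $\gamma<\kappa$ I look at the pair $(n_\gamma,z_\gamma)$; I want to choose $f(\gamma)\in V_\gamma$ so that the $\gamma$-th coordinate already forces $n_\gamma f\neq z_\gamma$, i.e.\ $n_\gamma f(\gamma)\neq z_\gamma(\gamma)$ in $\T$. If $n_\gamma=0$ then we need only ensure $z_\gamma\neq 0$ in $\T^\kappa$ is violated somewhere, which is automatic unless $z_\gamma=0$, and for $z_\gamma=0$ we instead use a positive multiple — so effectively we may assume $n_\gamma\ge 1$. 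If $n_\gamma=1$ it suffices to pick $f(\gamma)\in V_\gamma$ with $f(\gamma)\neq z_\gamma(\gamma)$, possible since $V_\gamma$ is infinite. If $n_\gamma\ge 2$, I apply Lemma~\ref{torus:lemma} with $m=\max\{m_0,n_\gamma+1\}$, $n=n_\gamma$, $V=V_\gamma$ (so $2/m\le 2/m_0<\delta\le l(V_\gamma)$ and $1\le n<m$), and with $z$ arbitrary (say $z=0$) and $z'=z_\gamma(\gamma)$: this produces $f(\gamma)\in V_\gamma$ with $n_\gamma f(\gamma)\neq z_\gamma(\gamma)$. Set $f(\gamma)$ to be this value. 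After $\kappa$ steps $f\in\prod_{\gamma<\kappa}V_\gamma$ is defined.

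Finally I would verify (i) and (ii). For any $n\in\N$ and any $z\in\{0\}\cup K$, the pair $(n,z(\gamma))$ appeared at some stage $\gamma$ — more precisely, for each fixed $n$ and each fixed $\xi<\tau$ the constraint "$nf\neq k_\xi$" was assigned some coordinate $\gamma$, and there $nf(\gamma)\neq k_\xi(\gamma)$, hence $nf\neq k_\xi$ in $\T^\kappa$; similarly the constraints "$nf\neq 0$" are met. Thus $f$ has infinite order, so $\grp f\cong\Z$, giving (i); and $\grp f\cap K=\{0\}$, giving (ii). The only point requiring care — and the one I'd call the main obstacle — is the bookkeeping: there are $\kappa$ coordinates available but the family of constraints has size $\tau\cdot\omega=\tau\le\kappa$, so there is exactly enough room, and one must set up the enumeration of $(n,z)$-pairs so that every required pair genuinely gets a coordinate (taking a surjection from $\kappa$ onto the constraint set, which exists since $1\le|$constraints$|\le\kappa$); once this is arranged the per-step choice is immediate from Lemma~\ref{torus:lemma} together with the uniform lower bound $\delta$ on the arc lengths, which is precisely what lets a single $m_0$ work at every stage.
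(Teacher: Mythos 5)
Your proposal is correct and is essentially the paper's own argument: enumerate the constraints $(n,h)\in\N\times K$, give each one its own coordinate, use Lemma~\ref{torus:lemma} (enabled by the uniform positive lower bound on the arc lengths) to arrange $n f(\gamma)\neq h(\gamma)$ there, fill in the remaining coordinates arbitrarily, and deduce $nf\notin K$ for all $n\ge 1$, which yields both (i) and (ii). The only differences are cosmetic bookkeeping (you index the recursion by all $\kappa$ coordinates via a surjection onto the constraint set, while the paper runs a recursion of length $\tau$ choosing distinct coordinates $\gamma_\alpha$; your fixed $m_0$ plays the role of the paper's $m=n_\alpha(k+1)$).
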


\begin{proof} Fix $k\in\N$ such that  $\fraction{2}{k}<l(V_\gamma)$ for every $\gamma<\kappa$. 
Since $|K|\le\tau$, we can choose an enumeration  $K\times\N=\{(h_\alpha,n_\alpha):\alpha<\tau\}$ of the set $K\times \N$.  By transfinite recursion on $\alpha<\tau$ we will select $\gamma_\alpha<\kappa$ and $y_{\gamma_\alpha}\in\T$ satisfying the following properties:
\begin{itemize} 
\item[(i$_\alpha$)]   $\gamma_\alpha\not\in\{\gamma_\beta:\beta<\alpha\}$,
\item[(ii$_\alpha$)]  $y_{\gamma_\alpha}\in V_{\gamma_\alpha}$,
\item[(iii$_\alpha$)] $n_\alpha y_{\gamma_\alpha}\not=h_\alpha(\gamma_\alpha)$.
\end{itemize}

\smallskip
{\sl Basis of recursion\/}. Select $\gamma_0<\kappa$ arbitrarily. Apply Lemma~\ref{torus:lemma} to $V=V_{\gamma_0}$, $z=0$, $z'=h_0(\gamma_0)$, $n=n_0$ and $m=n_0 (k+1)$ to choose $y_{\gamma_0}\in\T$ satisfying (ii$_0$) and (iii$_0$).  Condition (i$_0$) is vacuous.

\smallskip
{\sl Recursive step\/}. Let  $\alpha<\tau$,  and suppose that $\gamma_\beta<\kappa$ and $y_{\gamma_\beta}\in\T$ satisfying (i$_\beta$)--(iii$_\beta$) 
 have already been selected for all $\beta<\alpha$. We will now choose $\gamma_\alpha<\kappa$ and $y_{\gamma_\alpha}\in\T$ satisfying  (i$_\alpha$)--(iii$_\alpha$).  
 Since $|\alpha|<\tau\le\kappa$, we can choose $\gamma_\alpha<\kappa$ satisfying (i$_\alpha$). Now we  apply Lemma~\ref{torus:lemma} to $V=V_{\gamma_\alpha}$, 
$z=0$, $z'=f_\alpha(\gamma_\alpha)$, $n=n_\alpha$ and $m=n_\alpha(k+1)$ to choose $y_{\gamma_\alpha}\in\T$ satisfying (ii$_\alpha$), and (iii$_\alpha$). 

\smallskip
The recursion being complete,  choose $y_\gamma\in V_\gamma$ arbitrarily for every $\gamma\in\kappa\setminus\{\gamma_\alpha:\alpha<\tau\}$. Define $f\in\T^\kappa$ by $f(\gamma)=y_\gamma$ for each $\gamma< \kappa$. Then $f\in \prod\{V_\gamma:\gamma<\kappa\}$. We claim that 
\begin{equation}
\label{not:in:the:subgroup}
nf\not\in K\ \ 
\mbox{ for every}\ \ n\in\N.
\end{equation}
Indeed, let $n\in\N$ and $h\in K$ be arbitrary. Then $(h,n)\in K\times \N$,  and so  $(h,n)=(h_\alpha,n_\alpha)$ for some $\alpha<\kappa$. Now  $nf(\gamma_\alpha)=n_\alpha y_{\gamma_\alpha}\not=h_\alpha(\gamma_\alpha)=h(\gamma_\alpha)$  by (iii$_\alpha$).  Thus, $nf\not=h$. Since $h\in K$ was arbitrary, this proves \eqref{not:in:the:subgroup}. From \eqref{not:in:the:subgroup} we immediately get both (i) and (ii).
\end{proof}

\begin{lemma}
\label{first:lemma}
Let $\tau$ and $\kappa$ be infinite cardinals such that $\tau\le\kappa$. Let $K$ be a subgroup of $\T^\kappa$ such that $|K|\le\tau$. Assume that $f'\in K$, $m\in\N$ and $m\ge 2$.  For every $\gamma<\kappa$, let $V_\gamma$ be an open arc in $\T$ such that $\fraction{2}{m}<l(V_\gamma)$. Then there exists  $f\in \prod\{V_\gamma:\gamma<\kappa\}$ satisfying the following properties:
\begin{itemize}
\item[(i)]   $mf=f'$;
\item[(ii)]  $nf\not\in K$ for all $n\in\N$ with $1\le n<m$.
\end{itemize}
\end{lemma}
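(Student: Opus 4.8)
The plan is to follow closely the transfinite-recursion scheme from the proof of Lemma~\ref{second:lemma}, with two adjustments forced by the stronger conclusion~(i): at \emph{every} coordinate $\gamma<\kappa$ (not just at the coordinates chosen during the recursion) the value $f(\gamma)$ must be one of the $m$ solutions in $\T$ of the equation $my=f'(\gamma)$, and the ``diagonalization against $K$'' only has to be carried out for the multiples $nf$ with $1\le n<m$.

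First I would fix an enumeration $K\times\{1,2,\dots,m-1\}=\{(h_\alpha,n_\alpha):\alpha<\tau\}$, which is possible because $|K|\le\tau$ and $m-1<\omega\le\tau$ give $|K\times\{1,\dots,m-1\}|\le\tau$ (repeating entries if necessary). Then, by transfinite recursion on $\alpha<\tau$, I would choose ordinals $\gamma_\alpha<\kappa$ and points $y_{\gamma_\alpha}\in\T$ so that:
\begin{itemize}
\item[(i$_\alpha$)] $\gamma_\alpha\notin\{\gamma_\beta:\beta<\alpha\}$;
\item[(ii$_\alpha$)] $y_{\gamma_\alpha}\in V_{\gamma_\alpha}$ and $m y_{\gamma_\alpha}=f'(\gamma_\alpha)$;
\item[(iii$_\alpha$)] $n_\alpha y_{\gamma_\alpha}\ne h_\alpha(\gamma_\alpha)$.
\end{itemize}
Condition (i$_\alpha$) can be met because $|\alpha|<\tau\le\kappa$; once $\gamma_\alpha$ is fixed, Lemma~\ref{torus:lemma} applied to $V=V_{\gamma_\alpha}$, $z=f'(\gamma_\alpha)$, $z'=h_\alpha(\gamma_\alpha)$, $n=n_\alpha$ and the given $m$ produces $y_{\gamma_\alpha}$ satisfying (ii$_\alpha$) and (iii$_\alpha$): indeed $1\le n_\alpha<m$, and $2/m<l(V_{\gamma_\alpha})$ holds by hypothesis.

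After the recursion, for each remaining $\gamma\in\kappa\setminus\{\gamma_\alpha:\alpha<\tau\}$ I would pick $y_\gamma\in V_\gamma$ with $m y_\gamma=f'(\gamma)$; such a point exists since the $m$ solutions of $my=f'(\gamma)$ in $\T$ are spaced $1/m$ apart, so every arc of length $>1/m$ --- in particular $V_\gamma$, whose length exceeds $2/m$ --- meets the solution set. Setting $f(\gamma)=y_\gamma$ for all $\gamma<\kappa$ gives $f\in\prod\{V_\gamma:\gamma<\kappa\}$ with $mf=f'$, which is~(i). For~(ii), given $n$ with $1\le n<m$ and $h\in K$, pick $\alpha<\tau$ with $(h_\alpha,n_\alpha)=(h,n)$; then by (iii$_\alpha$) we have $nf(\gamma_\alpha)=n_\alpha y_{\gamma_\alpha}\ne h_\alpha(\gamma_\alpha)=h(\gamma_\alpha)$, so $nf\ne h$, and since $h\in K$ was arbitrary, $nf\notin K$.

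I do not anticipate a genuine obstacle here --- the proof is a routine variant of Lemma~\ref{second:lemma} --- but the one place that needs a little care is the treatment of the coordinates not touched by the recursion: there one still has to place a solution of $my=f'(\gamma)$ inside $V_\gamma$, and this is precisely the role of the hypothesis $2/m<l(V_\gamma)$ (which comfortably exceeds the $1/m$ actually needed) together with the elementary description of the solution set of $my=f'(\gamma)$ in $\T$.
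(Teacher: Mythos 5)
Your proof is correct and follows essentially the same transfinite-recursion argument as the paper, including the same use of Lemma~\ref{torus:lemma} at each recursion step and the same diagonalization against the enumeration of $K\times\{1,\dots,m-1\}$. The only cosmetic difference is at the untouched coordinates, where you argue directly that an arc of length $>1/m$ meets the solution set of $my=f'(\gamma)$, while the paper simply invokes Lemma~\ref{torus:lemma} with $z'=0$ and $n=1$; the content is identical.
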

\begin{proof}
Since $|K|\le\tau$, we can choose an enumeration  $K\times \{1,2,\dots,m-1\}=\{(h_\alpha,n_\alpha):\alpha<\tau\}$ of the set  $K\times \{1,2,\dots,m-1\}$. By transfinite recursion on $\alpha<\tau$ we will select $\gamma_\alpha<\kappa$ and $y_{\gamma_\alpha}\in\T$ with the following properties:
\begin{itemize}
\item[(i$_\alpha$)] $\gamma_\alpha\not\in\{\gamma_\beta:\beta<\alpha\}$,
\item[(ii$_\alpha$)] $y_{\gamma_\alpha}\in V_{\gamma_\alpha}$,
\item[(iii$_\alpha$)] $my_{\gamma_\alpha}=f'(\gamma_\alpha)$,
\item[(iv$_\alpha$)] $n_\alpha y_{\gamma_\alpha}\not=h_\alpha(\gamma_\alpha)$.
\end{itemize}

\smallskip
{\sl Basis of recursion\/}. Select $\gamma_0<\kappa$ arbitrarily, and apply Lemma~\ref{torus:lemma} to $V=V_{\gamma_0}$, $z=f'(\gamma_0)$, $z'=h_0(\gamma_0)$, $n=n_0$ and $m$ to choose $y_{\gamma_0}\in\T$ satisfying (ii$_0$), (iii$_0$) and (iv$_0$). Condition (i$_0$) is vacuous.

\smallskip
{\sl Recursive step\/}. Let $\alpha<\tau$, and suppose that $\gamma_\beta<\kappa$ and $y_{\gamma_\beta}\in\T$ satisfying (i$_\beta$)--(iv$_\beta$) have already been selected for all $\beta<\alpha$. We will now choose $\gamma_\alpha<\kappa$ and $y_{\gamma_\alpha}\in\T$ satisfying (i$_\alpha$)--(iv$_\alpha$). Since $|\alpha|<\tau\le\kappa$, we can choose $\gamma_\alpha< \kappa$ satisfying (i$_\alpha$). Now we  apply Lemma~\ref{torus:lemma} to $V=V_{\gamma_\alpha}$, 
$z=f'(\gamma_\alpha)$, $z'=h_\alpha(\gamma_\alpha)$, $n=n_\alpha$ and $m$ to choose $y_{\gamma_\alpha}\in\T$ satisfying (ii$_\alpha$), (iii$_\alpha$) and (iv$_\alpha$).

\smallskip
The recursion being complete, for every $\gamma\in \kappa\setminus\{\gamma_\alpha:\alpha<\tau\}$, apply Lemma~\ref{torus:lemma} to $V=V_{\gamma}$, 
$z=f'(\gamma)$, $z'=0$, $n=1$ and $m$ to choose $y_\gamma\in V_\gamma$ such that $my_{\gamma}=f'(\gamma)$.

Define $f\in\T^\kappa$ by $f(\gamma)=y_\gamma$ for every $\gamma< \kappa$. Then  $f\in \prod\{V_\gamma:\gamma<\kappa\}$ and (i) is satisfied. To prove (ii), choose $n\in\N$ such that $1\le n<m$. Let $h\in K$ be arbitrary. Then $(h,n)\in K\times \{1,2,\dots,m-1\}$, and so $(h,n)=(h_\alpha,n_\alpha)$ for some $\alpha<\kappa$. Now  $nf(\gamma_\alpha)=n_\alpha y_{\gamma_\alpha}\not=h_\alpha(\gamma_\alpha)=h(\gamma_\alpha)$ by (iv$_\alpha$). Therefore, $nf\not=h$. Since $h\in K$ was arbitrary, this proves $nf\not\in K$. 
\end{proof}

\begin{lemma} \label{Lemma:extension} 
Let $G$ and $G^*$ be abelian groups, and let $K$ and $K^*$ be subgroups of $G$ and $ G^*$, respectively. Suppose also that $x \in G$, $x^*\in G^*$, 
$m\in\N$, $m\ge 2$, and $\psi:K\to K^*$ is an isomorphism satisfying the following properties: 
\begin{itemize}
\item[(a)] $mx\in K$ and $mx^*\in K^*$;
\item[(b)] $nx\not\in K$ and $nx^*\not\in K^*$ whenever $n\in\N$ and $1\le n<m$;
\item[(c)] $\psi(mx)=mx^*$.
\end{itemize}
Then there exists a unique isomorphism $\varphi: K + \grp{x}\to K^*+ \grp{x^*} $ extending $\psi$ such that  $\varphi(x) = x^*$.
\end{lemma}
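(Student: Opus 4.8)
The plan is to describe the map explicitly on the group $K+\grp{x}$, check it is well-defined (this is the crux), and then verify it is a homomorphism, surjective, injective, and unique.

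First I would fix the normal form of elements. Every element of $K+\grp{x}$ can be written as $g = k + jx$ for some $k\in K$ and some integer $j$. By (a), we have $mx\in K$, so $j$ may be taken in the range $0\le j<m$; indeed, writing $j = qm + r$ with $0\le r<m$ we get $k+jx = (k+q\,mx) + rx$ and $k+q\,mx\in K$. I would \emph{define} $\varphi(k+jx) = \psi(k) + jx^*$ and then show this does not depend on the chosen representation. So suppose $k_1 + j_1 x = k_2 + j_2 x$ with $k_1,k_2\in K$ and $0\le j_1,j_2<m$; without loss of generality $j_1\ge j_2$, so $(j_1-j_2)x = k_2-k_1\in K$ with $0\le j_1-j_2<m$. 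By (b), the only way $nx\in K$ for $0\le n<m$ is $n=0$; hence $j_1=j_2$, and then $k_1=k_2$. This shows the representation with $0\le j<m$ is in fact unique, so $\varphi$ is a well-defined function. The analogous statement holds in $K^*+\grp{x^*}$ using the hypotheses on $x^*$, which is what makes the inverse map behave symmetrically.

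Next I would check $\varphi$ is a homomorphism. Given $g_1 = k_1 + j_1 x$ and $g_2 = k_2 + j_2 x$ with $0\le j_1,j_2<m$, write $j_1+j_2 = \varepsilon m + r$ with $\varepsilon\in\{0,1\}$ and $0\le r<m$; then $g_1+g_2 = (k_1+k_2+\varepsilon\, mx) + rx$, so by definition $\varphi(g_1+g_2) = \psi(k_1+k_2+\varepsilon\, mx) + rx^* = \psi(k_1)+\psi(k_2)+\varepsilon\,\psi(mx) + rx^*$. On the other side, $\varphi(g_1)+\varphi(g_2) = \psi(k_1)+\psi(k_2) + (j_1+j_2)x^* = \psi(k_1)+\psi(k_2) + \varepsilon\, mx^* + rx^*$. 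These agree precisely because $\psi(mx)=mx^*$ by (c). Surjectivity is immediate since $\psi$ is onto $K^*$ and $x^*$ is hit by $\varphi(x)$. For injectivity, suppose $\varphi(k+jx)=0$ with $0\le j<m$; then $\psi(k) + jx^* = 0$, i.e. $jx^* = -\psi(k)\in K^*$ with $0\le j<m$, so by (b) applied to $x^*$ we get $j=0$, whence $\psi(k)=0$ and $k=0$ since $\psi$ is injective. Thus $\varphi$ is an isomorphism extending $\psi$ (take $j=0$) with $\varphi(x)=x^*$ (take $k=0$, $j=1$). Uniqueness is forced: any homomorphism agreeing with $\psi$ on $K$ and sending $x$ to $x^*$ must send $k+jx$ to $\psi(k)+jx^*$, and these elements exhaust the domain.

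The main obstacle is the well-definedness step, and the hypotheses (a)--(c) are tailored exactly to handle it: (a) gives the reduction to $0\le j<m$, (b) on both sides gives uniqueness of that normal form (so both $\varphi$ and its inverse are functions), and (c) is the single compatibility condition needed for the homomorphism property to survive the ``carry'' when $j_1+j_2\ge m$. Everything else is bookkeeping. I expect the write-up to be short, with the only mild subtlety being to state clearly that one uses (b) \emph{for $x$} in proving well-definedness of $\varphi$ and (b) \emph{for $x^*$} in proving injectivity.
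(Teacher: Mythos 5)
Your proof is correct and follows essentially the same route as the paper: define $\varphi(k+jx)=\psi(k)+jx^*$ explicitly and verify well-definedness, the homomorphism property, injectivity, surjectivity and uniqueness directly from (a)--(c). The only difference is bookkeeping: the paper allows arbitrary $j\in\Z$ and uses (c) already in the well-definedness check, whereas you normalize to $0\le j<m$ (so well-definedness follows from (b) alone) and let (c) enter in the ``carry'' step of the homomorphism verification.
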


\begin{proof} Define $\varphi$ by 
\begin{equation}
\label{varphi:prime:equation}
\varphi(h+kx)=\psi(h)+kx^*
\mbox{ for }
h\in K
\mbox{ and }
k\in \Z.
\end{equation}

To check that this definition is correct, suppose that 
\begin{equation}\label{(**)}
h+kx=h'+k'x,
\end{equation} 
with   $h, h'\in K$ and $k, k'\in \Z$. Hence, $(k'-k)x=h-h'\in K$, so by (a)  and (b), one has $k'-k=lm$ for  some $l\in \Z$, which yields $h-h'=lmx$. This gives
$$
\psi(h)-\psi(h')=\psi(h-h')=\psi(lmx)=l\psi(mx)=lmx^*=(k'-k)x^*=k'x^*-kx^*
$$
by (c), and so $\psi(h) +kx^*=\psi(h') +k'x^*$. Comparing this  with  (\ref{(**)}), we conclude that  \eqref{varphi:prime:equation} correctly defines a homomorphism $\varphi$. From \eqref{varphi:prime:equation} we get $\varphi(x) = x^*$ and $\varphi\restriction_K=\psi$.  Moreover,  $\varphi$ is surjective and unique with these properties. 

To prove that $\varphi$ is a monomorphism, assume that $\varphi(h+kx) = 0$ for some $h\in K$ and $k\in \Z$. Then $\psi(h) + kx^*=0$, so  $kx^*\in K^*$. Consequently, $m$ divides $k$ by (a) and (b). Then $kx\in K$, and so $ h+kx \in K$ as well. Therefore, $\varphi(h+kx) =\psi(h+kx) =0$ yields $ h+kx=0$, as $\psi$ is a monomorphism. 
\end{proof}

\section{Proof of Theorem \ref{HMP}}

\label{Proof:section}
Fix a countable base $\mathscr{V}$ for the topology of $\T$ consisting of open arcs of $\T$ such that $\T\in\mathscr{V}$.
Consider the Tychonoff product topology on $2^\tau$, and let $\mathscr{B}$ be the canonical base for $2^\tau$ consisting of non-empty clopen subsets of $2^\tau$ such that $|\mathscr{B}|=\tau$. Let
$$
\mathbb{U}=\left\{\mathscr{U}\in[\mathscr{B}]^{<\omega}: \mathscr{U}
\mbox{ is a cover of }
2^\tau
\mbox{ by pairwise disjoint sets}
\right\}.
$$
For $\xi\in 2^\tau$ and $\mathscr{U}\in \mathbb{U}$, let $U_{\xi,\mathscr{U}}\in\mathscr{U}$ denote the unique $U\in \mathscr{U}$ such that $\xi\in U$. Define 
$$
\mathbb{E}=\{(\mathscr{U}, v): \mathscr{U}\in\mathbb{U} \mbox{ and }
v:\mathscr{U}\to \mathscr{V}
\mbox{ is a function}
\}.
$$
 For $(\mathscr{U}, v)\in \mathbb{E}$, let 
$$
F(\mathscr{U}, v)=\left\{f\in\T^{2^\tau}: f(\xi)\in v(U_{\xi,\mathscr{U}}) \mbox{ for all }\xi\in 2^\tau\right\}=\prod \left\{v(U_{\xi,\mathscr{U}}): \xi\in 2^\tau\right\}.
$$

Clearly, $|\mathbb{E}|=\tau$, so we can fix an enumeration $\mathbb{E}=\{(\mathscr{U}_\alpha, v_\alpha):\alpha<\tau\}$ of $\mathbb{E}$
such that $\mathscr{U}_0=\left\{2^\tau\right\}$ and $v_0\left(2^\tau\right)=\T$. For each $\alpha<\tau$, choose $n_\alpha\in\N$ such that 
\begin{equation}
\label{n:alpha}
\fraction{2}{n_\alpha}<\min\{l(v(U)):U\in\mathscr{U}\}.
\end{equation}

By transfinite recursion on $\alpha<\tau$ we will choose an element $x_\alpha\in S$ and define a map $\varphi_\alpha:H_\alpha=\grp{\{x_\beta:\beta\le\alpha\}}\to\T^{2^\tau}$ satisfying the following conditions:
\begin{itemize}
 \item[(i$_\alpha$)]   $\varphi_\alpha(x_\alpha)\in F(\mathscr{U}_\alpha, v_\alpha)$,
 \item[(ii$_\alpha$)]  $\varphi_\alpha$ is a monomorphism,
 \item[(iii$_\alpha$)] $\varphi_\alpha\restriction_{H_\beta}=\varphi_\beta$ for all $\beta<\alpha$.
\end{itemize}

\smallskip
{\sl Basis of recursion\/}. Pick $x_0\in S$ arbitrarily, and let $\varphi_0:\grp{x_0}\to \T^{2^\tau}=F(\mathscr{U}_0,v_0)$ be an arbitrary monomorphism. 
Then conditions (i$_0$) and  (ii$_0$) are satisfied,  while the condition (iii$_0$)  is vacuous.

\smallskip
{\sl Recursive step\/}. Let $\alpha<\tau$, and suppose that $x_\beta\in S$ and a map $\varphi_\beta:H_\beta\to\T^{2^\tau}$ satisfying (i$_\beta$), (ii$_\beta$) and (iii$_\beta$) have already been constructed for every $\beta<\alpha$. We are going to define $x_\alpha\in S$ and a map $\varphi_\alpha:H_\alpha\to\T^{2^\tau}$ satisfying  (i$_\alpha$), (ii$_\alpha$) and  (iii$_\alpha$).

Define 
$$
H'_\alpha=\grp{\{x_\beta:\beta<\alpha\}}=\bigcup_{\beta<\alpha} H_\beta.
$$
Since (ii$_\beta$) and (iii$_\beta$) hold for every $\beta<\alpha$,  
$$
\varphi'_\alpha=\bigcup_{\beta<\alpha} \varphi_\beta:H'_\alpha\to \T^{2^\tau}$$
 is a monomorphism.
Since $\{x_\beta:\beta<\alpha\}\subseteq S$,  $|\alpha|<\tau$ and $S$ is $\tau$-wide, one has
$
n_\alpha! S\setminus \grp{\{x_\beta:\beta<\alpha\}}=n_\alpha! S\setminus H'_\alpha\not=\emptyset,
$
and so there exists  $x_\alpha\in S$  such that   $n_\alpha! x_\alpha\not\in H'_\alpha$. In particular, 
\begin{equation}
\label{small:powers}
nx_\alpha\not\in H'_\alpha
\mbox{ for all }
n\le n_\alpha.
\end{equation}

Let $\kappa=2^\tau$ and $K=\varphi'_\alpha(H'_\alpha)$. For $\xi\in 2^\tau$, define $V_\xi=v_\alpha(U_{\xi,\mathscr{U}_\alpha})$.  Then \eqref{n:alpha} yields
\begin{equation}
\label{W:xi}
\fraction{2}{n_\alpha}<l(v_\alpha(U_{\xi,\mathscr{U}_\alpha}))=l(V_\xi)
\ \ 
\mbox{ for every }
\ \ 
\xi\in2^\tau.
\end{equation}

We need to consider two cases.  

\smallskip

{\sl Case 1\/}. $\{n\in\N\setminus\{0\}: nx_\alpha\in H'_\alpha\}=\emptyset$. In this case, $\grp{x_\alpha}\cong\Z$ and the sum  $\grp{x_\alpha} +H'_\alpha = \grp{x_\alpha} \oplus H'_\alpha$ is direct. Since $\{l(V_\gamma):\gamma<\kappa\}$ has a positive lower bound by \eqref{W:xi}, we can apply Lemma~\ref{second:lemma} to choose 
\begin{equation}
\label{choose:f}
f\in \prod\left\{V_\xi:\xi\in 2^\tau\right\}=F(\mathscr{U}_\alpha, v_\alpha)
\end{equation}
with $\grp{f}\cong\Z$ and $\grp{f}\cap K=\{0\}$. Then the sum $K+\grp{f}=K\oplus\grp{f}$ is direct as well. Since $\grp{x_\alpha}\cong\grp{f}\cong\Z$, there exists a unique monomorphism $\varphi_\alpha: H_\alpha=H'_\alpha\oplus \grp{x_\alpha}\to K\oplus\grp{f}\subseteq \T^{2^\tau}$ extending $\varphi'_\alpha$ such that  $\varphi_\alpha(x_\alpha)=f$.

\smallskip

{\sl Case 2\/}.  $\{n\in\N: nx_\alpha\in H'_\alpha\}\not=\emptyset$.  Let  $m=\min\{n\in \N:  nx_\alpha\in H'_\alpha\}$ and $f'=\varphi'_\alpha(m x_\alpha)\in K$.  Then $m>n_\alpha$ by (\ref{small:powers}), so from \eqref{W:xi} we obtain  $\fraction{2}{m}<\fraction{2}{n_\alpha}<l(V_\xi)$ for every $\xi\in2^\tau$. Since $n_\alpha\ge 1$, we have $m\ge 2$. Applying Lemma~\ref{first:lemma},  we get $f$ satisfying \eqref{choose:f} such that $m f=f'$ and $nf\not\in K$ for all $n\in\N$ with $1\le n<m$. Observe that $\T^{2^\tau}$ (taken as $G^*$), 
$H_\alpha'$ (taken as $K$), $K$ (taken as $K^*$), $x_\alpha$ (taken as $x$), $f$ (taken as $x^*$),  $\varphi'_\alpha$ (taken as $\psi$)
and $m$ satisfy the assumptions of Lemma~\ref{Lemma:extension}. Denoting $\varphi$ from the conclusion of this lemma by $\varphi_\alpha$, we obtain a monomorphism $\varphi_\alpha: H_\alpha\to \T^{2^\tau}$ extending $\varphi'_\alpha$ such that $\varphi_\alpha(x_\alpha)=f$. 

\smallskip

 The monomorphism $\varphi_\alpha$ obviously satisfies (i$_\alpha$), (ii$_\alpha$) and (iii$_\alpha$) in both cases.

\smallskip

The recursive construction being complete, let 
$$
H=\bigcup_{\alpha<\tau} H_\alpha
\ \ 
\mbox{ and }
\ \ 
\varphi=\bigcup_{\alpha<\tau}\varphi_\alpha.
$$
Since (ii$_\alpha$) and (iii$_\alpha$) are satisfied for every $\alpha<\tau$, $\varphi:H\to \T^{2^\tau}$ is a monomorphism.

It remains only to check that $\varphi(H\cap S)$ is dense in $\T^{2^\tau}$. Let $W$ be a non-empty open subset of $\T^{2^\tau}$. Choose pairwise distinct $\xi_1,\dots,\xi_i\in 2^\tau$ and $V_1,\dots,V_i\in\mathscr{V}$ such that 
$$
\left\{f\in \T^{2^\tau}: f(\xi_j)\in V_j
\mbox{ for all }
j\le i\right\}
\subseteq W.
$$ 
Select
$\mathscr{U}=\{U_1,\dots,U_i\} \in\mathbb{U}$ which separates $\xi_j$'s; that is, $j,k\le i$ and $\xi_j\in U_k$ implies $j=k$. Define $v:\mathscr{U}\to \mathscr{V}$ by $v(U_j)=V_j$ for $j\le i$.  Then $(\mathscr{U},v)\in \mathbb{E}$, and so $(\mathscr{U},v)=(\mathscr{U}_\alpha,v_\alpha)$ for some $\alpha<\tau$.  Note that 
\begin{equation}
\label{eq:F:subset:of:V}
F(\mathscr{U}_\alpha, v_\alpha)= F(\mathscr{U},v)\subseteq  \left\{f\in \T^{2^\tau}: f(\xi_j)\in V_j
\mbox{ for all }
j\le i\right\} \subseteq W.
\end{equation}
Since $x_\alpha\in S\cap H_\alpha\subseteq H\cap S$ and 
$
\varphi(x_\alpha)=\varphi_\alpha(x_\alpha)\in F(\mathscr{U}_\alpha, v_\alpha)
$
by (i$_\alpha$),  it follows from (\ref{eq:F:subset:of:V}) that $\varphi(x_\alpha)\in \varphi(H\cap S)\cap W\not=\emptyset$.

\section{Questions}

All instances of potential density in Section~\ref{potential:density:section} were witnessed by a precompact Hausdorff group topology. This makes it natural to ask the following

\begin{question}
Let $S$ be a potentially dense subset of an abelian group $G$. Does there exist a Hausdorff precompact group topology $\mathscr{T}$ on $G$ such that $S$ is $\mathscr{T}$-dense in $G$? 
\end{question}

The answer to this question is positive when $S$ is countable~\cite{DS_Kronecker}. 

It was shown in~\cite{DS_Abelian_MZ} that every potentially dense subset $S$ of an abelian group $G$ must be Zariski dense in $G$.
(We refer the reader to the Introduction for the definition of Zariski density.) Corollary~\ref{necessary:condition:corollary} gives another necessary condition for potential density of $S$ in $G$. One may wonder if these two conditions, combined together,  are also sufficient:

\begin{question}
Let $S$ be an infinite subset of an Abelian group $G$  such that:
\begin{itemize}
\item[(a)] $S$ is Zariski dense in $G$, and 
\item[(b)]  
$\log \log |nG| \leq |nS|$ \ 
(equivalently, $|nG|\le 2^{2^{|nS|}}$)  for all $n\in\N$.
\end{itemize}
Is $S$ potentially dense in $G$? Does there exist a Hausdorff precompact group topology $\mathscr{T}$ on $G$ such that $S$ is $\mathscr{T}$-dense in $G$? 
\end{question}

We note that this question is an appropriate modification of~\cite[Question 45]{DS_OPIT} 
that is necessary in view of Example~\ref{potentially:dense:example}.

\section*{Acknowledgment}  It is our pleasure to thank the referee for her/his
useful suggestions that inspired the authors to re-design Section 3.


\begin{thebibliography}{99}

\bibitem{DPS}
D.\,N.~Dikranjan, I.\,R.~Prodanov, and L.\,N.~Stoyanov, {\em Topological Groups (Characters, Dualities and Minimal Group Topologies)\/}, Monographs and Textbooks in Pure and Applied Mathematics  {\bf 130}, Marcel Dekker, Inc., New York-Basel, 1990.

\bibitem{DS} D.~Dikranjan and D.~Shakhmatov, \emph{Algebraic structure of pseudocompact groups\/}, Mem. Amer. Math. Soc. \textbf{133} (1998), 83 pages.

\bibitem{DS-Forcing}D.~Dikranjan and D.~Shakhmatov, {\em Forcing hereditarily separable compact-like group topologies on abelian groups\/},  Topology Appl.  \textbf{151}  (2005),  
2--54. 

\bibitem{DS_OPIT} D.~Dikranjan and D.~Shakhmatov, {\em Selected topics from the structure theory of topological groups\/}, pp. 389-406 in: Open Problems in Topology II (E.Perl, ed.), Elsevier 2007.

\bibitem{DS_Abelian_MZ} D.~Dikranjan and D.~Shakhmatov, {\em The Markov-Zariski topology of an abelian group\/}, 
submitted.

\bibitem{DS_Kronecker} D.~Dikranjan and D.~Shakhmatov, {\em A Kronecker-Weyl theorem for subsets of Abelian groups\/}, 
work in progress.

\bibitem{DT0} D.~Dikranjan and  M.~Tka\v cenko, {\em Weakly complete free topological groups\/}, Topology Appl. {\bf 112} (2001), 259--287.

\bibitem{Eng} R.~Engel'king, General topology (Second edition), Sigma Series in Pure Mathematics, {\bf 6\/}, Heldermann Verlag, Berlin, 1989. 

\bibitem{Mar} A.\,A.~Markov, {\em On unconditionally closed sets\/},
Mat. Sbornik {\bf 18\/} (1946), 3--28 (in Russian); English translation in:
A.\,A.~Markov, Three papers on topological groups: I. On the existence of periodic connected topological groups. II. On free topological groups. III. On unconditionally closed sets,
Amer. Math. Soc. Translation 1950, (1950). no.~30, 120~pp.; yet another English translation in: Topology and Topological Algebra, Translations Series~1, vol.~8, pp. 273--304, Amer. Math. Soc., 1962.

\bibitem{TY} M.~Tkachenko and I.~Yaschenko, {\em Independent group topologies on abelian groups\/}, Topology Appl.  {\bf 122}  (2002),  
425--451.

\bibitem{W}  H.~Weyl, {\em \" Uber die Gleichverteilung von Zahlen mod. Eins\/}  (in German), Math. Ann. {\bf 77} (1916), no. 3, 313--352.

\end{thebibliography}
\end{document}